\providecommand{\U}[1]{\protect\rule{.1in}{.1in}}
\newtheorem{theorem}{Theorem}
\newtheorem{corollary}[theorem]{Corollary}
\newtheorem{definition}[theorem]{Definition}
\newtheorem{lemma}[theorem]{Lemma}
\newtheorem{notation}[theorem]{Notation}
\newtheorem{proposition}[theorem]{Proposition}
\newtheorem{remark}[theorem]{Remark}
\newenvironment{proof}[1][Proof]{\textbf{#1.} }{\ \rule{0.5em}{0.5em}}
\begin{document}

\title{Axiomatic Differential Geometry I-1\\-Towards Model Categories of Differential Geometry-}
\author{Hirokazu Nishimura\\Institute of Mathematics\\University of Tsukuba\\Tsukuba, Ibaraki, 305-8571, JAPAN}
\maketitle

\begin{abstract}
In this paper we give an axiomatization of differential geometry comparable to
model categories for homotopy theory. Weil functors play a predominant role.

\end{abstract}

\section{Introduction}

It is well known that the category of topological spaces and continuous
mappins is by no means cartesian closed, which has harassed algebraic
topologists. In 1967 Steenrod \cite{st} popularized the idea of \underline
{convenient category} by announcing that the category of compactly generated
spaces and continuous mappings renders a good setting for algebraic topology.
The advertised category is cartesian closed, complete and cocomplete, and
contains all CW complexes. In the same year, Quillen \cite{qu}\ finally
succeeded in axiomatizing \underline{homotopy theory}, which is now known as
\underline{model categories}.

Turning to \underline{differential geometry}, more than a few geometers have
tried to give a convenient category for differential geometry, say,
\cite{ch1}, \cite{ch2}, \cite{ch3}, \cite{fr1}, \cite{fr2}, \cite{fr3},
\cite{kri}, \cite{mos}, \cite{si}, \cite{sm} and \cite{sou}. Some acute
mathematicians compared these proposed convenient categories, say, \cite{ba1}
and \cite{s1}. Now what is completely missing is an axiomatization of
differential geometry comparable to model categories for homotopy theory. We
hastily home in on one so as to fill in the rift, at least as far as
infinitesimal aspects of differential geometry are concerned.

\underline{Weil algebras} were introduced by Weil himself \cite{we}. They were
intended for the algebraic realization of fabulous \underline{nilpotent
infinitesimals}. It is presumably synthetic differential geometers who have
used Weil algebras systematically in differential geometry for the first time.
They prefer to enjoy tangled relations among various Weil algebras. In
particular, they have reached the crucial notion of \underline{microlinearity}%
. To synthetic differential geometers, Weil functors are merely the
exponentiation by infinitesimal objects corresponding to Weil algebras, while,
to orthodox differential geometers, they are a natural generalization of the
tangent bundle functor so that they can be defined without any reference to
legendary infinitesimal objects. Roughly speaking, our axiomatization of
differential geometry is a convenient category endowed with Weil functors.
Generally speaking, any proposed convenient category is so broad as to contain
spaces which are not necessarily amenable to the study by methods of
differential geometry. From our standpoint, the notion of \underline{manifold}
is a flawed concept, or politely saying, a transitory concept to be replaced
by another more appropriate one, just as Riemann integrals were to be replaced
by Lebesgue integrals. It is the notion of microlinearity that enables us to
delineate the class of spaces adequate for the study of differential geometry.
It gives us a great pleasure to see that the full subcategory of the
convenient category consisting of all spaces susceptible of
differential-geometric investigation is cartesian closed, whatever the
convenient category may be. We will discuss our axiomatization in \S \ref{s3}.

In orthodx differential geometry, just as smooth manifolds are spaces which
are locally Euclidean (namely, locally diffeomorphic to some open subsets of
$\mathbf{R}^{m}$), \underline{fibered manifolds} are locally the canonical
projections $\mathbf{R}^{m+n}\rightarrow\mathbf{R}^{m}$. As Mangiarotti and
Modugno \cite{mm} have stressed, a large portion of differential geometry (at
least up to connections and their related concepts) could be developed upon
fibered manifolds. We should say that the othodox notion of fibered manifold
is slightly distorted, simply because the map is required to be a submersion
so as to make every emerging entity amenable to the realm of manifolds. From
our standpoint, the story goes as follows. Given a convenient category
provided with Weil functors, its arrow category is also naturally endowed with
derived Weil functors. Our notion of fiber bundle is simple enough. It is
microlinear objects in the arrow category. This point will be discussed in
detail in \S \ref{s4}. In \S \ref{s5}, we will discuss \underline{vertical
Weil functors}.

\section{Preliminaries}

\subsection{Category Theory}

There are many good textbooks on category theory. By way of example, \cite{ma}
and \cite{sch} are recommendable classics. Therefore it would be absurd to try
to explain category theory from scratch. However we must fix our own notation
and terminology in this arena. A category $\mathcal{K}$\ is called
\underline{left exact} if it has finite limits. A functor between left exact
categories is called \underline{left exact} if it preserves finite limits. A
\underline{diagram} in a category $\mathcal{K}$\ is a functor $\mathcal{D}%
$\ from a category $\Lambda$\ to the category $\mathcal{K}$. Its limit in
$\mathcal{K}$ is usually denoted by $\mathrm{Lim}_{\lambda\in\Lambda
}\mathrm{\ }\mathcal{D}_{\lambda}$. Given a natural transformation
$\rho:\mathcal{J}\overset{\cdot}{\rightarrow}\mathcal{K}$ between two functors
$\mathcal{F},\mathcal{G}:\mathcal{J}\rightarrow\mathcal{K}$ and an object $X$
in $\mathcal{J}$, the morphism $\mathcal{F}(X)\rightarrow\mathcal{G}\left(
X\right)  $ induced by the natural transformation $\rho$\ is denoted by
$\rho(X)$ or $\rho_{X}$.Given a category $\mathcal{K}$, its arrow category is
usually denoted by $\overrightarrow{\mathcal{K}}$ in preference to
$\mathcal{K}^{\rightarrow}$.

\subsection{Weil Algebras}

Let $k$\ be a commutative ring. The category of Weil algebras over $k$\ (also
called Weil $k$-algebras) is denoted by $\mathbf{Weil}_{k}$. It is well known
that the category $\mathbf{Weil}_{k}$\ is left exact. The terminal object in
$\mathbf{Weil}_{k}$\ is $k$ itself, and, given an object $W$\ in
$\mathbf{Weil}_{k}$, the unique morphism $W\rightarrow k$ in $\mathbf{Weil}%
_{k}$ is denoted by $\underline{\tau}_{W}$. Since any object $W$\ in
$\mathbf{Weil}_{k}$ is a $k$-algebra, there is a canonical morphism
$k\rightarrow W$, which we denote by $\underline{\iota}_{W}$. Given two
objects $W_{1}$ and $W_{2}$, we denote their tensor algebra by $W_{1}%
\otimes_{k}W_{2}$. For a good treatise on Weil algebras, the reader is
referred to \S \ 1.16 of \cite{ko}. Given a left exact category $\mathcal{K}%
$\ and a $k$-algebra object $\mathbb{R}$\ in $\mathcal{K}$, there is a
canonical functor $\mathbb{R}\underline{\otimes}\cdot$\ (denoted by
$\mathbb{R\otimes\cdot} $\ in \cite{ko}) from the category $\mathbf{Weil}_{k}$
to the category of $k$-algebra objects and their homomorphisms in
$\mathcal{K}$.

\section{\label{s3}Axiomatics}

\begin{definition}
A \underline{DG-category} (DG stands for Differential Geometry) is a quadruple
$\left(  \mathcal{K},\mathbb{R},\mathbf{T},\alpha\right)  $, where

\begin{enumerate}
\item $\mathcal{K}$ is a category which is left exact and cartesian closed.

\item $\mathbb{R}$ is a commutative $k$-algebra object in $\mathcal{K}$.

\item Given a Weil $k$-algebra $W$, $\mathbf{T}^{W}:\mathcal{K}\rightarrow
\mathcal{K}$ is a left exact functor for any Weil $k$-algebra $W$ subject to
the condition that $\mathbf{T}^{k}:\mathcal{K}\rightarrow\mathcal{K}$ is the
identity functor, while we have
\begin{equation}
\mathbf{T}^{W_{2}}\circ\mathbf{T}^{W_{1}}=\mathbf{T}^{W_{1}\otimes_{k}W_{2}%
}\label{3.4}%
\end{equation}
for any Weil $k$-algebras $W_{1}$ and $W_{2}$.

\item Given a Weil $k$-algebra $W$, we have
\[
\mathbf{T}^{W}\mathbb{R}=\mathbb{R}\underline{\otimes}W
\]

\item $\alpha_{\varphi}:\mathbf{T}^{W_{1}}\overset{\cdot}{\rightarrow
}\mathbf{T}^{W_{2}}$ is a natural transformation for any morphism
$\varphi:W_{1}\rightarrow W_{2}$ in the category $\mathbf{Weil}_{k}$ such that
we have
\[
\alpha_{\psi}\cdot\alpha_{\varphi}=\alpha_{\psi\circ\varphi}%
\]
for any morphisms $\varphi:W_{1}\rightarrow W_{2}$ and $\psi:W_{2}\rightarrow
W_{3}$ in the category $\mathbf{Weil}_{k}$, while we have
\[
\alpha_{\mathrm{id}_{W}}=\mathrm{id}_{\mathbf{T}^{W}}%
\]
for any identity morphism $\mathrm{id}_{W}:W\rightarrow W$ in the category
$\mathbf{Weil}_{k}$.

\item Given a morphism $\varphi:W_{1}\rightarrow W_{2}$ in the category
$\mathbf{Weil}_{k}$, we have
\[
\alpha_{\varphi}\left(  \mathbb{R}\right)  =\mathbb{R}\underline{\otimes
}\varphi
\]

\end{enumerate}
\end{definition}

Now some comments on the above definition are in order.

\begin{remark}
\begin{enumerate}
\item How far the category $\mathcal{K}$\ should be exact is undoubtedly
disputable. Every geometer with the seven cardinal virtues agrees that the
category of smooth manifolds and smooth mappings is by no means exact enough.
However the requirement that $\mathcal{K}$\ should be a topos would presumably
be demanding too much so long as $\mathcal{K}$\ is expected to be naturally
realizable in our real world. Synthetic differential geometers have
constructed their well-adapted models, which are toposes, in their favotite
imaginary world. Our requirement in this paper that $\mathcal{K}$\ should be
left exact and cartesian closed is barely minimal without doubt. This point
will be discussed further in subsequent papers.

\item The functors $\mathbf{T}^{W}$'s stand for so-called Weil functors.

\item The conditions 4 and 6 in the above definition correspond in a sense to
(abstrct) Taylor expansion theorem in calculus or to what is dubbed the
(generalized) Kock-Lawvere axiom in synthetic differential geometry.

\item The formula (\ref{3.4}) has been inspired by Proposition in 35.18 of
\cite{kol}.

\item What is to be called the \underline{integration axiom} should
undoubtedly be considered. This point will be discussed in subsequent papers.
\end{enumerate}
\end{remark}

\begin{notation}
The natural transformation \frame{$\alpha_{\underline{\tau}_{W}}%
:\mathbf{T}^{W}\overset{\cdot}{\rightarrow}\mathrm{id}_{\mathcal{K}}$} is
denoted by $\tau_{W}$.
\end{notation}

\begin{notation}
The natural transformation \fbox{$\alpha_{\underline{\iota}_{W}}%
:\mathrm{id}_{\mathcal{K}}\overset{\cdot}{\rightarrow}\mathbf{T}^{W}$} is
denoted by $\iota_{W}$.
\end{notation}

It is easy to see (cf. Chapter II, \S 3, Proposition 1 of \cite{ma}) that

\begin{proposition}
\label{t3.1}Given a DG-category $\left(  \mathcal{K},\mathbb{R},\mathbf{T}%
,\alpha\right)  $, the pair $\left(  \mathbf{T},\alpha\right)  $ defines a
bifunctor $\otimes_{\mathbf{T},\alpha}:\mathcal{K}\times\mathbf{Weil}%
_{k}\mathbf{\rightarrow}\mathcal{K}$ in the sense that we have
\[
X\otimes_{\mathbf{T},\alpha}W=\mathbf{T}^{W}X
\]
for any object $X$\ in the category $\mathcal{K}$\ and any Weil $k$-algebra
$W$, while we have
\begin{align*}
& f\otimes_{\mathbf{T},\alpha}\varphi\\
& =\alpha_{\varphi,Y}\circ\mathbf{T}^{W_{1}}f\\
& =\mathbf{T}^{W_{2}}f\circ\alpha_{\varphi,X}%
\end{align*}
for any morphism $f:X\rightarrow Y$ in the category $\mathcal{K}$\ and any
morphism $\varphi:W_{1}\rightarrow W_{2}$ in the category $\mathbf{Weil}_{k} $.
\end{proposition}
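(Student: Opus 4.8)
The plan is to reduce the statement to the standard criterion for bifunctors recorded as Chapter II, \S 3, Proposition 1 of \cite{ma}: a functor out of a product category $\mathcal{K}\times\mathbf{Weil}_{k}$ is exactly the data of two families of ``partial'' functors that agree on objects and satisfy an interchange (commutation) law. First I would isolate the two partial families. In the first variable, with $W$ held fixed, the assignment $X\mapsto\mathbf{T}^{W}X$ is already a functor $\mathcal{K}\rightarrow\mathcal{K}$ by condition 3 in the definition of DG-category. In the second variable, with $X$ held fixed, I would check that $W\mapsto\mathbf{T}^{W}X$, $\varphi\mapsto\alpha_{\varphi,X}$ is a functor $\mathbf{Weil}_{k}\rightarrow\mathcal{K}$; this is precisely the component-at-$X$ reading of the two equations in condition 5, since $\alpha_{\mathrm{id}_{W}}=\mathrm{id}_{\mathbf{T}^{W}}$ yields $\alpha_{\mathrm{id}_{W},X}=\mathrm{id}_{\mathbf{T}^{W}X}$ (preservation of identities) and $\alpha_{\psi}\cdot\alpha_{\varphi}=\alpha_{\psi\circ\varphi}$ yields $\alpha_{\psi,X}\circ\alpha_{\varphi,X}=\alpha_{\psi\circ\varphi,X}$ (preservation of composition).

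The two partial families agree on objects trivially, both evaluating to $\mathbf{T}^{W}X$. The interchange law I need is exactly the asserted equality of the two displayed formulas for $f\otimes_{\mathbf{T},\alpha}\varphi$, namely for $f:X\rightarrow Y$ and $\varphi:W_{1}\rightarrow W_{2}$,
\[
\alpha_{\varphi,Y}\circ\mathbf{T}^{W_{1}}f=\mathbf{T}^{W_{2}}f\circ\alpha_{\varphi,X}.
\]
But this is nothing other than the naturality square for the natural transformation $\alpha_{\varphi}:\mathbf{T}^{W_{1}}\overset{\cdot}{\rightarrow}\mathbf{T}^{W_{2}}$ evaluated at the morphism $f$. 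Hence the two expressions define one and the same arrow $\mathbf{T}^{W_{1}}X\rightarrow\mathbf{T}^{W_{2}}Y$, so $f\otimes_{\mathbf{T},\alpha}\varphi$ is well defined, and I would take this common value as the action of $\otimes_{\mathbf{T},\alpha}$ on a pair of morphisms.

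It then remains to confirm that this common composite is functorial on the product category. For identities, $\mathbf{T}^{W}\mathrm{id}_{X}=\mathrm{id}_{\mathbf{T}^{W}X}$ together with $\alpha_{\mathrm{id}_{W},X}=\mathrm{id}_{\mathbf{T}^{W}X}$ gives $\mathrm{id}_{X}\otimes_{\mathbf{T},\alpha}\mathrm{id}_{W}=\mathrm{id}_{\mathbf{T}^{W}X}$. For composition, given $f:X\rightarrow Y$, $g:Y\rightarrow Z$ and $\varphi:W_{1}\rightarrow W_{2}$, $\psi:W_{2}\rightarrow W_{3}$, I would expand $(g\otimes_{\mathbf{T},\alpha}\psi)\circ(f\otimes_{\mathbf{T},\alpha}\varphi)$ in the form $\mathbf{T}^{(\cdot)}(\cdot)\circ\alpha_{(\cdot),(\cdot)}$, push $\alpha_{\psi,Y}$ past $\mathbf{T}^{W_{2}}f$ by one further application of naturality of $\alpha_{\psi}$, and then collapse using functoriality of $\mathbf{T}^{W_{3}}$ and the composition law $\alpha_{\psi,X}\circ\alpha_{\varphi,X}=\alpha_{\psi\circ\varphi,X}$; the outcome is $(g\circ f)\otimes_{\mathbf{T},\alpha}(\psi\circ\varphi)$. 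There is no genuine obstacle: the whole argument is a matter of unpacking Mac Lane's criterion and matching each of its hypotheses against the axioms imposed on $\alpha$. The only point demanding care is bookkeeping the direction of vertical composition of natural transformations, so that $\alpha_{\psi}\cdot\alpha_{\varphi}=\alpha_{\psi\circ\varphi}$ is applied with its components in the correct order.
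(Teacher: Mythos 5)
Your argument is correct and is exactly the route the paper intends: the paper gives no written proof, merely citing Chapter II, \S 3, Proposition 1 of \cite{ma}, and your proposal is precisely the unpacking of that criterion---partial functoriality in each variable from conditions 3 and 5, with the interchange law being the naturality square of $\alpha_{\varphi}$. Nothing is missing; you have simply written out the details the paper leaves to the reader.
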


\begin{notation}
We will often write $X\otimes W$ in place of $X\otimes_{\mathbf{T},\alpha}W $
unless any confusion may occur.
\end{notation}

We shall fix a DG-category $\left(  \mathcal{K},\mathbb{R},\mathbf{T}%
,\alpha\right)  $ throughout the rest of the paper.

\begin{definition}
An object $X$ in the category $\mathcal{K}$ is called \underline{\textit{Weil
exponentiable}}\textit{\ }if
\begin{equation}
(X\otimes(W_{1}\otimes_{k}W_{2}))^{Y}=(X\otimes W_{1})^{Y}\otimes
W_{2}\label{3.1}%
\end{equation}
holds naturally for any object $Y$ in the category $\mathcal{K}$ and any Weil
$k$-algebras $W_{1}$ and $W_{2}$.
\end{definition}

\begin{remark}
If $Y=1$, then (\ref{3.1}) degenerates into
\begin{equation}
X\otimes(W_{1}\otimes_{k}W_{2})=(X\otimes W_{1})\otimes W_{2}\label{3.2}%
\end{equation}
If $W_{1}=k$, then (\ref{3.1}) degenerates into
\begin{equation}
(X\otimes W_{2})^{Y}=X^{Y}\otimes W_{2}\label{3.3}%
\end{equation}

\end{remark}

\begin{proposition}
\label{t3.2}If $X$ is a Weil exponentiable object in the category
$\mathcal{K}$, then so is $X\otimes W$ for any Weil $k$-algebra $W$.
\end{proposition}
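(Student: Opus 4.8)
The plan is to show that if $X$ satisfies the Weil exponentiability condition (\ref{3.1}) for all Weil $k$-algebras, then $X\otimes W$ does too, for a fixed Weil $k$-algebra $W$. So I must verify that
\[
((X\otimes W)\otimes(W_{1}\otimes_{k}W_{2}))^{Y}=((X\otimes W)\otimes W_{1})^{Y}\otimes W_{2}
\]
holds naturally in $Y$, $W_1$, and $W_2$. The key tool is the associativity relation (\ref{3.2}), which is itself the $Y=1$ degeneration of the hypothesis on $X$, together with the full exponentiability (\ref{3.1}) of $X$ applied to cleverly chosen Weil algebras.

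First I would rewrite the left-hand side using associativity (\ref{3.2}) in the form $X\otimes(W\otimes_k(W_1\otimes_k W_2)) = (X\otimes W)\otimes(W_1\otimes_k W_2)$, so that the outer object is really $X$ transported along a single tensor product of Weil algebras. Then I would invoke the coassociativity of $\otimes_k$ in $\mathbf{Weil}_k$ to regroup $W\otimes_k(W_1\otimes_k W_2)$ as $(W\otimes_k W_1)\otimes_k W_2$. At this point the left-hand side reads $(X\otimes((W\otimes_k W_1)\otimes_k W_2))^{Y}$, which is precisely an instance of the Weil exponentiability of $X$ itself, with the first algebra taken to be $W\otimes_k W_1$ and the second taken to be $W_2$. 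Applying (\ref{3.1}) for $X$ gives $(X\otimes(W\otimes_k W_1))^{Y}\otimes W_2$, and a final application of associativity (\ref{3.2}) inside the exponent turns $X\otimes(W\otimes_k W_1)$ back into $(X\otimes W)\otimes W_1$, yielding the desired right-hand side.

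The main subtlety, and the step I expect to require the most care, is not the algebraic bookkeeping but the \emph{naturality} and \emph{coherence} of these identifications. The isomorphisms (\ref{3.1}) and (\ref{3.2}) are asserted to hold naturally, and I would need to confirm that the regrouping isomorphisms coming from the associativity of $\otimes_k$ on the Weil-algebra side (mediated through $\alpha$ and the functoriality of $\mathbf{T}$) are compatible with the exponential adjunction in the cartesian closed category $\mathcal{K}$. Concretely, I would check that each identification I use is induced by a morphism in $\mathbf{Weil}_k$ via $\alpha$, so that the whole chain assembles into a single natural isomorphism rather than merely a family of isomorphisms indexed by $Y$, $W_1$, $W_2$. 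Once the coherence of the associativity constraint is granted, the equality follows by composing the four displayed steps.
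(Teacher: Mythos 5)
Your proposal is correct and follows essentially the same route as the paper: rewrite $(X\otimes W)\otimes(W_{1}\otimes_{k}W_{2})$ as $X\otimes((W\otimes_{k}W_{1})\otimes_{k}W_{2})$ via (\ref{3.2}) and the associativity of $\otimes_{k}$, apply (\ref{3.1}) for $X$ with the algebras $W\otimes_{k}W_{1}$ and $W_{2}$, and convert back with (\ref{3.2}). Your additional remarks on naturality are reasonable caution but not a departure from the paper's argument, which treats these identifications as the strict equalities supplied by the axiom (\ref{3.4}).
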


\begin{proof}
For any object $Y$ in the category $\mathcal{K}$ and any Weil $k$-algebras
$W_{1}$ and $W_{2}$, we have
\begin{align*}
&  ((X\otimes W)\otimes(W_{1}\otimes_{k}W_{2}))^{Y}\\
&  =(X\otimes((W\otimes_{k}W_{1})\otimes_{k}W_{2}))^{Y}\\
&  =(X\otimes(W\otimes_{k}W_{1}))^{Y}\otimes W_{2}\\
&  =((X\otimes W)\otimes W_{1})^{Y}\otimes W_{2}%
\end{align*}
so that we have the desired result.
\end{proof}

\begin{proposition}
\label{t3.3}If $\mathcal{F}:\Lambda\rightarrow\mathcal{K}$ is a finite diagram
in the category $\mathcal{K}$\ such that $\mathcal{F}_{\lambda}$ is Weil
exponentiable for any $\lambda\in\Lambda$, then $\mathrm{Lim}_{\lambda
\in\Lambda}\mathrm{\ }\mathcal{F}_{\lambda}$ is Weil exponentiable.
\end{proposition}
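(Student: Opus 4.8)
The plan is to reduce everything to two structural facts about the category $\mathcal{K}$: that each Weil functor $\mathbf{T}^{W}=-\otimes W$ is left exact and hence commutes with the finite limit $\mathrm{Lim}_{\lambda\in\Lambda}\mathcal{F}_{\lambda}$, and that the exponentiation functor $(-)^{Y}$, being a right adjoint in the cartesian closed category $\mathcal{K}$ (its left adjoint is $-\times Y$), preserves all limits. Writing $L=\mathrm{Lim}_{\lambda\in\Lambda}\mathcal{F}_{\lambda}$, I would verify the defining equation (\ref{3.1}) for $L$ by a chain of natural isomorphisms that pushes the limit past $\otimes$ and $(-)^{Y}$, applies the Weil exponentiability of each $\mathcal{F}_{\lambda}$ termwise, and then pulls the limit back out.

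Concretely, first I would use left exactness of $\mathbf{T}^{W_{1}\otimes_{k}W_{2}}$ to write $L\otimes(W_{1}\otimes_{k}W_{2})=\mathrm{Lim}_{\lambda\in\Lambda}(\mathcal{F}_{\lambda}\otimes(W_{1}\otimes_{k}W_{2}))$, and then invoke preservation of this limit by $(-)^{Y}$ to obtain $(L\otimes(W_{1}\otimes_{k}W_{2}))^{Y}=\mathrm{Lim}_{\lambda\in\Lambda}(\mathcal{F}_{\lambda}\otimes(W_{1}\otimes_{k}W_{2}))^{Y}$. Applying the hypothesis that each $\mathcal{F}_{\lambda}$ is Weil exponentiable, every term equals $(\mathcal{F}_{\lambda}\otimes W_{1})^{Y}\otimes W_{2}$. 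Finally I would run the same two steps in reverse: left exactness of $\mathbf{T}^{W_{2}}$ pulls the limit outside $\otimes W_{2}$, preservation of limits by $(-)^{Y}$ pulls it outside the exponential, and left exactness of $\mathbf{T}^{W_{1}}$ recombines $\mathrm{Lim}_{\lambda\in\Lambda}(\mathcal{F}_{\lambda}\otimes W_{1})$ into $L\otimes W_{1}$, yielding $(L\otimes W_{1})^{Y}\otimes W_{2}$, which is exactly the right-hand side of (\ref{3.1}).

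The point on which the whole argument rests is that finiteness of $\Lambda$ is precisely what licenses commuting the limit with the Weil functors: $\mathbf{T}^{W}$ is only assumed left exact, so it preserves finite limits but not arbitrary ones, whereas $(-)^{Y}$ preserves all limits for free thanks to cartesian closedness. I therefore expect no real obstacle from the limit manipulations themselves; they are a formal consequence of adjointness and left exactness.

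The one genuine subtlety I anticipate is bookkeeping of naturality rather than any hard step. Since Weil exponentiability is asserted to hold \emph{naturally}, I must check that the termwise isomorphisms $(\mathcal{F}_{\lambda}\otimes(W_{1}\otimes_{k}W_{2}))^{Y}\cong(\mathcal{F}_{\lambda}\otimes W_{1})^{Y}\otimes W_{2}$ are compatible with the transition morphisms of the diagram $\mathcal{F}$, so that they assemble into an isomorphism of diagrams indexed by $\Lambda$ and hence induce a single isomorphism of limits that is moreover natural in $Y$. This amounts to confronting the naturality clause in the definition of Weil exponentiable with the functoriality of $\otimes_{\mathbf{T},\alpha}$ recorded in Proposition \ref{t3.1}; once that compatibility is in hand, the equality of limits follows automatically.
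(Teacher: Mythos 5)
Your proposal follows exactly the same chain of identifications as the paper's own proof: commute the finite limit past $\mathbf{T}^{W_{1}\otimes_{k}W_{2}}$ and $(-)^{Y}$, apply Weil exponentiability of each $\mathcal{F}_{\lambda}$ termwise, and reverse the steps to reassemble $(L\otimes W_{1})^{Y}\otimes W_{2}$. The paper simply cites left exactness of $(-)^{Y}$ where you invoke its right-adjointness, and it leaves the naturality bookkeeping implicit, so your argument is correct and essentially identical.
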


\begin{proof}
Since functors $\mathbf{T}^{W}:\mathcal{K}\rightarrow\mathcal{K}$ ($\forall
W\in\mathbf{Weil}_{k}$) and the exponentiation by $Y$ are left exact functors,
we have
\begin{align*}
& (\left(  \mathrm{Lim}_{\lambda\in\Lambda}\mathrm{\ }\mathcal{F}_{\lambda
}\right)  \otimes(W_{1}\otimes_{k}W_{2}))^{Y}\\
& =\left(  \mathrm{Lim}_{\lambda\in\Lambda}\mathrm{\ }\left(  \mathcal{F}%
_{\lambda}\otimes(W_{1}\otimes_{k}W_{2})\right)  \right)  ^{Y}\\
& =\mathrm{Lim}_{\lambda\in\Lambda}\mathrm{\ }\left(  \mathcal{F}_{\lambda
}\otimes(W_{1}\otimes_{k}W_{2})\right)  ^{Y}\\
& =\mathrm{Lim}_{\lambda\in\Lambda}\mathrm{\ }\left(  (\mathcal{F}_{\lambda
}\otimes W_{1})^{Y}\otimes W_{2}\right) \\
& =\left(  \mathrm{Lim}_{\lambda\in\Lambda}\mathrm{\ }(\mathcal{F}_{\lambda
}\otimes W_{1})^{Y}\right)  \otimes W_{2}\\
& =\left(  \mathrm{Lim}_{\lambda\in\Lambda}\mathrm{\ }(\mathcal{F}_{\lambda
}\otimes W_{1})\right)  ^{Y}\otimes W_{2}\\
& =\mathrm{\ }(\left(  \mathrm{Lim}_{\lambda\in\Lambda}\mathcal{F}_{\lambda
}\right)  \otimes W_{1})^{Y}\otimes W_{2}%
\end{align*}
so that we have the desired result.
\end{proof}

\begin{proposition}
\label{t3.4}If $X$ is a Weil exponentiable object in the category
$\mathcal{K}$, then so is $X^{Y}$ for any object $Y$ in the category
$\mathcal{K}$.
\end{proposition}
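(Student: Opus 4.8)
The plan is to verify the defining identity~(\ref{3.1}) for the object $X^{Y}$ directly. Thus, fixing an arbitrary test object $Z$ in $\mathcal{K}$ and arbitrary Weil $k$-algebras $W_{1}$ and $W_{2}$, I must establish
\[
((X^{Y})\otimes(W_{1}\otimes_{k}W_{2}))^{Z}=((X^{Y})\otimes W_{1})^{Z}\otimes W_{2}
\]
naturally in $Z$, $W_{1}$ and $W_{2}$. The two facts I intend to lean on are, first, the special case~(\ref{3.3}) of the Weil exponentiability of $X$, which asserts that $\mathbf{T}^{W}$ commutes with exponentiation by $Y$ (i.e. $(X\otimes W)^{Y}=X^{Y}\otimes W$ for every $W$), and second, the exponential law $(A^{Y})^{Z}=A^{Y\times Z}$ afforded by cartesian closedness of $\mathcal{K}$.

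The computation I envisage proceeds as a chain of canonical identifications. First I would rewrite the outer factor $X^{Y}\otimes(W_{1}\otimes_{k}W_{2})$ using~(\ref{3.3}) (with $W_{1}\otimes_{k}W_{2}$ in the role of the single Weil algebra), so that the left-hand side becomes $((X\otimes(W_{1}\otimes_{k}W_{2}))^{Y})^{Z}$. Next, collapsing the iterated exponential by the exponential law turns this into $(X\otimes(W_{1}\otimes_{k}W_{2}))^{Y\times Z}$. Now I would apply the full Weil-exponentiability identity~(\ref{3.1}) for $X$, but taking the product object $Y\times Z$ as the test object, obtaining $(X\otimes W_{1})^{Y\times Z}\otimes W_{2}$. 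Reversing the exponential law returns $((X\otimes W_{1})^{Y})^{Z}\otimes W_{2}$, and a final application of~(\ref{3.3}) (now with $W_{1}$) rewrites $(X\otimes W_{1})^{Y}$ as $X^{Y}\otimes W_{1}$, yielding $((X^{Y})\otimes W_{1})^{Z}\otimes W_{2}$, which is precisely the right-hand side.

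The steps are all formal, so the chief care is bookkeeping. The point I expect to require the most attention is the legitimacy of feeding the product object $Y\times Z$ into~(\ref{3.1}): this is exactly where cartesian closedness is indispensable, since without the exponential law I could not merge the two exponentials $(-)^{Y}$ and $(-)^{Z}$ into a single exponential to which the hypothesis on $X$ (quantified over \emph{all} test objects) applies. A secondary but genuine concern is naturality: each identification above is natural in the relevant variables, so I would track the naturality of every isomorphism alongside the equalities to ensure that the composite is natural in $Z$, $W_{1}$ and $W_{2}$, as the definition of Weil exponentiability demands.
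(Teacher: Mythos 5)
Your proposal is correct and follows essentially the same route as the paper's own proof: both reduce the claim to the chain $(X^{Y}\otimes(W_{1}\otimes_{k}W_{2}))^{Z}=(X\otimes(W_{1}\otimes_{k}W_{2}))^{Y\times Z}=(X\otimes W_{1})^{Y\times Z}\otimes W_{2}=((X\otimes W_{1})^{Y})^{Z}\otimes W_{2}=(X^{Y}\otimes W_{1})^{Z}\otimes W_{2}$, with the key move being the application of the Weil-exponentiability of $X$ at the test object $Y\times Z$. The only difference is that you spell out the intermediate use of~(\ref{3.3}) before invoking the exponential law, a step the paper compresses.
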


\begin{proof}
For any object $Z$ in category $\mathcal{K}$ and any Weil $k$-algebras $W_{1}
$ and $W_{2}$, we have
\begin{align*}
&  (X^{Y}\otimes(W_{1}\otimes_{k}W_{2}))^{Z}\\
&  =(X\otimes(W_{1}\otimes_{k}W_{2}))^{Y\times Z}\\
&  =(X\otimes W_{1})^{Y\times Z}\otimes W_{2}\\
&  =((X\otimes W_{1})^{Y})^{Z}\otimes W_{2}\\
&  =(X^{Y}\otimes W_{1})^{Z}\otimes W_{2}%
\end{align*}
so that we have the desired result.
\end{proof}

\begin{theorem}
\label{t3.5}The full subcategory $\mathcal{K}_{\mathbf{WE}}$\ of all Weil
exponentiable objects in the category $\mathcal{K}$ is a left exact and
cartesian closed category.
\end{theorem}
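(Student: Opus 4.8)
The plan is to exhibit $\mathcal{K}_{\mathbf{WE}}$ as a full subcategory of $\mathcal{K}$ that is closed under the formation of finite limits and of exponentials in $\mathcal{K}$, and then to observe that these ambient constructions serve, automatically, as the corresponding constructions inside $\mathcal{K}_{\mathbf{WE}}$. Since $\mathcal{K}_{\mathbf{WE}}$ is full, every relevant Hom-set in it coincides with the one in $\mathcal{K}$, so each universal property established in $\mathcal{K}$ descends verbatim to $\mathcal{K}_{\mathbf{WE}}$ as soon as the objects and structural morphisms witnessing that property lie in $\mathcal{K}_{\mathbf{WE}}$.

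First I would treat left exactness. Given a finite diagram $\mathcal{F}:\Lambda\rightarrow\mathcal{K}_{\mathbf{WE}}$, composing with the inclusion yields a finite diagram in $\mathcal{K}$ each of whose vertices is Weil exponentiable. By Proposition \ref{t3.3}, the limit $\mathrm{Lim}_{\lambda\in\Lambda}\mathcal{F}_{\lambda}$ formed in $\mathcal{K}$ is again Weil exponentiable, hence an object of $\mathcal{K}_{\mathbf{WE}}$; taking $\Lambda$ empty shows in particular that the terminal object $1$ lies in $\mathcal{K}_{\mathbf{WE}}$. Because the limiting cone and its universal property are statements about morphisms that all live in the full subcategory, this cone is also a limit in $\mathcal{K}_{\mathbf{WE}}$. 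Thus $\mathcal{K}_{\mathbf{WE}}$ has all finite limits, computed as in $\mathcal{K}$, and is therefore left exact.

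Next I would treat cartesian closedness. Finite products are already available as a special case of the finite limits just constructed, and they agree with those of $\mathcal{K}$. For exponentials, let $X,Y\in\mathcal{K}_{\mathbf{WE}}$. By Proposition \ref{t3.4}, the exponential $X^{Y}$ formed in $\mathcal{K}$ is Weil exponentiable, so it belongs to $\mathcal{K}_{\mathbf{WE}}$; moreover the evaluation morphism $X^{Y}\times Y\rightarrow X$ is a morphism between objects of $\mathcal{K}_{\mathbf{WE}}$, since $X^{Y}\times Y$ is a product of Weil exponentiable objects and hence Weil exponentiable. Fullness then guarantees that the exponential adjunction $\mathrm{Hom}(Z\times Y,X)\cong\mathrm{Hom}(Z,X^{Y})$ valid in $\mathcal{K}$ restricts to $\mathcal{K}_{\mathbf{WE}}$ for every $Z\in\mathcal{K}_{\mathbf{WE}}$, exhibiting $X^{Y}$ as the exponential in $\mathcal{K}_{\mathbf{WE}}$.

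The conceptual content is carried entirely by Propositions \ref{t3.3} and \ref{t3.4}, which supply closure under the two constructions; once these are in hand no genuine obstacle remains. The only point requiring care is the routine verification that universal properties transfer to a full subcategory closed under the construction in question, and in particular that the structural morphisms themselves (the limiting projections and the evaluation map) lie in $\mathcal{K}_{\mathbf{WE}}$, so that the relevant cones and adjunctions really do restrict.
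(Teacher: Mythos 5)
Your proposal is correct and takes exactly the route the paper intends: the paper's own proof is the one-line remark that the theorem ``follows simply from Propositions \ref{t3.3} and \ref{t3.4}'', and your argument merely spells out the standard verification that a full subcategory closed under finite limits and exponentials inherits the corresponding universal properties. No discrepancy to report.
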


\begin{proof}
This follows simply from Propositions \ref{t3.3} and \ref{t3.4}.
\end{proof}

\begin{definition}
An object $X$ in the category $\mathcal{K}$ is called \underline
{\textit{microlinear}} providing that any finite limit diagram $\mathcal{D}$
in the category $\mathbf{Weil}_{k}$ yields a limit diagram $X\otimes
\mathcal{D}$ in $\mathcal{K}$, where $X\otimes\mathcal{D}$ is obtained from
$\mathcal{D}$ by putting $X\otimes$ to the left of every object and every
morphism in $\mathcal{D}$.
\end{definition}

\begin{proposition}
\label{t3.6}If an object $X$ in the category $\mathcal{K}$ is Weil
exponentiable and microlinear, then so is $X\otimes W$ for any Weil
$k$-algebra $W$.
\end{proposition}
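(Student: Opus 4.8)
The object $X \otimes W$ must be shown to be both Weil exponentiable and microlinear, so the proof naturally splits into two halves. The first half is immediate: Weil exponentiability of $X \otimes W$ is exactly the content of Proposition \ref{t3.2}, so nothing new is needed there. The entire burden of the argument is therefore to verify microlinearity of $X \otimes W$, and for this I would work directly from the definition, reducing the assertion about $X \otimes W$ to the assumed microlinearity of $X$ together with the left exactness of the Weil functors.

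The plan is as follows. Fix a finite limit diagram $\mathcal{D}:\Lambda \to \mathbf{Weil}_{k}$; I must show that $(X \otimes W) \otimes \mathcal{D}$ is a limit diagram in $\mathcal{K}$. The key move is to pull the functor $\mathbf{T}^{W}$ out of the expression. Using the composition law (\ref{3.4}), for each index $\lambda$ I have
\[
(X\otimes W)\otimes \mathcal{D}_{\lambda} = \mathbf{T}^{\mathcal{D}_{\lambda}}\mathbf{T}^{W}X = \mathbf{T}^{W\otimes_{k} \mathcal{D}_{\lambda}}X,
\]
while on the other hand
\[
\mathbf{T}^{W}(X\otimes \mathcal{D}_{\lambda}) = \mathbf{T}^{W}\mathbf{T}^{\mathcal{D}_{\lambda}}X = \mathbf{T}^{\mathcal{D}_{\lambda} \otimes_{k} W}X.
\]
These two objects differ only by the canonical symmetry isomorphism $W\otimes_{k} \mathcal{D}_{\lambda} \cong \mathcal{D}_{\lambda} \otimes_{k} W$ in $\mathbf{Weil}_{k}$; applying $\alpha$ to this isomorphism yields an isomorphism $(X\otimes W)\otimes \mathcal{D}_{\lambda} \cong \mathbf{T}^{W}(X\otimes \mathcal{D}_{\lambda})$. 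Because $\alpha$ is functorial in the Weil-algebra variable, these isomorphisms are compatible with the morphisms coming from $\mathcal{D}$, so they assemble into an isomorphism of diagrams $(X\otimes W)\otimes \mathcal{D} \cong \mathbf{T}^{W}(X\otimes \mathcal{D})$.

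Granting this identification, the conclusion follows quickly. Since $X$ is microlinear, $X\otimes \mathcal{D}$ is a limit diagram in $\mathcal{K}$; since $\mathbf{T}^{W}$ is left exact, it carries this limit diagram to the limit diagram $\mathbf{T}^{W}(X\otimes \mathcal{D})$; and a diagram isomorphic to a limit diagram is again a limit diagram. Hence $(X\otimes W)\otimes \mathcal{D}$ is a limit diagram, which is precisely microlinearity of $X\otimes W$.

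The step I expect to require the most care is the bookkeeping in the middle paragraph: verifying that the pointwise symmetry isomorphisms really do constitute a morphism of $\Lambda$-indexed diagrams, rather than merely an isomorphism object by object. This amounts to checking that the symmetry of $\otimes_{k}$ is natural in $\mathbf{Weil}_{k}$ and that $\alpha$ converts natural transformations of Weil algebras into natural transformations of functors compatibly with composition — both of which follow from the functoriality axioms for $\alpha$, but deserve to be spelled out. A reader wary of the commutativity-versus-equality issue for $\otimes_{k}$ could instead route the argument through the left exactness of $W\otimes_{k}(-):\mathbf{Weil}_{k} \to \mathbf{Weil}_{k}$, applying microlinearity of $X$ to the finite limit diagram $W\otimes_{k} \mathcal{D}$; but the route above has the advantage of invoking only the left exactness of $\mathbf{T}^{W}$, which is already guaranteed by the axioms.
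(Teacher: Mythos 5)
Your proposal is correct, and the Weil-exponentiability half is handled exactly as the paper does (by citing Proposition \ref{t3.2}); but for the microlinearity half you take a genuinely different route. The paper's proof stays inside $\mathbf{Weil}_{k}$: it writes $\left( X\otimes W\right) \otimes\mathcal{D}=X\otimes(W\otimes_{k}\mathcal{D})$ using (\ref{3.2}), observes that $W\otimes_{k}\cdot:\mathbf{Weil}_{k}\rightarrow\mathbf{Weil}_{k}$ preserves finite limits so that $W\otimes_{k}\mathcal{D}$ is again a finite limit diagram, and then applies the microlinearity of $X$ to that \emph{new} diagram --- i.e.\ precisely the alternative you sketch in your closing sentence. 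Your main argument instead stays inside $\mathcal{K}$: you identify $\left( X\otimes W\right) \otimes\mathcal{D}$ with $\mathbf{T}^{W}\left( X\otimes\mathcal{D}\right)$ via the symmetry isomorphism of $\otimes_{k}$ transported through $\alpha$, apply microlinearity of $X$ to the original diagram $\mathcal{D}$, and finish with the axiom that $\mathbf{T}^{W}$ is left exact. Both are valid, and the trade-off is real: the paper's route needs the (unproved but true, since $W$ is a finite-dimensional free $k$-module) exactness property of $W\otimes_{k}\cdot$ on $\mathbf{Weil}_{k}$, while yours needs only the postulated left exactness of $\mathbf{T}^{W}$ at the price of the naturality bookkeeping for the symmetry isomorphisms that you rightly flag --- a coherence check of the same kind that the paper itself leaves implicit when it treats (\ref{3.2}) as an equality of diagrams rather than merely of objects. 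It is also worth noting that on either route the identification $\left( X\otimes W_{1}\right) \otimes W_{2}=X\otimes\left( W_{1}\otimes_{k}W_{2}\right)$ already follows from the composition law (\ref{3.4}) for arbitrary $X$, so the Weil-exponentiability hypothesis is not really consumed by the microlinearity half; your version makes this slightly more visible than the paper's appeal to (\ref{3.2}).
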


\begin{proof}
Given a finite limit diagram $\mathcal{D}$ in the category $\mathbf{Weil}_{k}
$, we have
\[
\left(  X\otimes W\right)  \otimes\mathcal{D}=X\otimes(W\otimes_{k}%
\mathcal{D})
\]
by (\ref{3.2}). Since the functor $W\otimes_{k}\cdot:\mathbf{Weil}%
_{k}\mathbf{\rightarrow Weil}_{k}$ preserves finite limits, we have the
desired result.
\end{proof}

\begin{proposition}
\label{t3.7}If $\mathcal{F}:\Lambda\rightarrow\mathcal{K}$ is a finite diagram
in the category $\mathcal{K}$\ such that $\mathcal{F}_{\lambda}$ is
microlinear object in $\mathcal{K}$ for any $\lambda\in\Lambda$, then its
limit $\mathrm{Lim}_{\lambda\in\Lambda}\mathrm{\ }\mathcal{F}_{\lambda}$ is
also a microlinear object in $\mathcal{K}$.
\end{proposition}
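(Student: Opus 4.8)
The plan is to recast microlinearity as a preservation-of-limits statement and then invoke the commutation of limits with limits. First I would observe that, for a fixed object $X$, the assignment $W\mapsto X\otimes W=\mathbf{T}^{W}X$ together with the action $\varphi\mapsto\alpha_{\varphi,X}$ on morphisms constitutes a functor $\Phi_{X}:\mathbf{Weil}_{k}\rightarrow\mathcal{K}$; the functoriality is exactly the content of the relations $\alpha_{\psi}\cdot\alpha_{\varphi}=\alpha_{\psi\circ\varphi}$ and $\alpha_{\mathrm{id}_{W}}=\mathrm{id}_{\mathbf{T}^{W}}$ in the definition of a DG-category. With this reformulation, to say that $X$ is microlinear is precisely to say that $\Phi_{X}$ carries finite limit cones in $\mathbf{Weil}_{k}$ to limit cones in $\mathcal{K}$.

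Next I would exploit the left exactness of the individual Weil functors. Writing $L=\mathrm{Lim}_{\lambda\in\Lambda}\mathcal{F}_{\lambda}$ (which exists because $\mathcal{K}$ is left exact and $\Lambda$ is finite), the hypothesis that each $\mathbf{T}^{W}$ is left exact yields, for every Weil $k$-algebra $W$, a canonical identification $L\otimes W=\mathbf{T}^{W}L=\mathrm{Lim}_{\lambda\in\Lambda}\mathbf{T}^{W}\mathcal{F}_{\lambda}=\mathrm{Lim}_{\lambda\in\Lambda}(\mathcal{F}_{\lambda}\otimes W)$. The key point, which I would verify carefully, is that this identification is natural in $W$: a morphism $\varphi:W_{1}\rightarrow W_{2}$ induces $\alpha_{\varphi,L}$ on the left and the $\Lambda$-limit of the maps $\alpha_{\varphi,\mathcal{F}_{\lambda}}$ on the right, and these agree. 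In other words, the functor $\Phi_{L}$ is the objectwise $\Lambda$-limit of the family of functors $\Phi_{\mathcal{F}_{\lambda}}$.

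Finally I would combine these two facts through interchange of limits. Let $\mathcal{D}$ be any finite limit diagram in $\mathbf{Weil}_{k}$, say a limiting cone over a finite diagram $\mathcal{G}:M\rightarrow\mathbf{Weil}_{k}$ with apex $W_{0}=\mathrm{Lim}_{m\in M}\mathcal{G}_{m}$. Applying the naturality of the previous step along the whole cone $\mathcal{D}$ exhibits $L\otimes\mathcal{D}$ as the $\Lambda$-limit of the diagrams $\mathcal{F}_{\lambda}\otimes\mathcal{D}$. By microlinearity of each $\mathcal{F}_{\lambda}$, every $\mathcal{F}_{\lambda}\otimes\mathcal{D}$ is a limit cone, so $\mathcal{F}_{\lambda}\otimes W_{0}=\mathrm{Lim}_{m\in M}(\mathcal{F}_{\lambda}\otimes\mathcal{G}_{m})$. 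Taking the $\Lambda$-limit and interchanging the two finite limits gives $L\otimes W_{0}=\mathrm{Lim}_{\lambda}\mathrm{Lim}_{m}(\mathcal{F}_{\lambda}\otimes\mathcal{G}_{m})=\mathrm{Lim}_{m}\mathrm{Lim}_{\lambda}(\mathcal{F}_{\lambda}\otimes\mathcal{G}_{m})=\mathrm{Lim}_{m}(L\otimes\mathcal{G}_{m})$, which is exactly the assertion that $L\otimes\mathcal{D}$ is a limit diagram, i.e. that $L$ is microlinear.

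The main obstacle I anticipate is not any single computation but the careful bookkeeping of cones: one must confirm that the legs produced by interchanging the two limits really are the morphisms $L\otimes(\text{projections of }\mathcal{D})$, so that the resulting limit cone is literally $L\otimes\mathcal{D}$ rather than merely an abstractly isomorphic cone. Since both index categories $\Lambda$ and $M$ are finite and $\mathcal{K}$ is left exact, the Fubini-type interchange of limits is legitimate, and the real work lies entirely in checking the naturality in $W$ of all the identifications used above.
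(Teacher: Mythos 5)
Your proposal is correct and follows essentially the same route as the paper's own proof: both arguments use the left exactness of the functors $\mathbf{T}^{W}$ to identify $L\otimes\mathcal{D}_{\gamma}$ with $\mathrm{Lim}_{\lambda\in\Lambda}\,(\mathcal{F}_{\lambda}\otimes\mathcal{D}_{\gamma})$, then interchange the two finite limits, and finally apply microlinearity of each $\mathcal{F}_{\lambda}$ and left exactness once more. The only difference is that you make explicit the naturality-in-$W$ and cone bookkeeping that the paper leaves implicit.
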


\begin{proof}
Given a finite diagram $\mathcal{D}:\Gamma\rightarrow\mathbf{Weil}_{k}$ in the
category $\mathbf{Weil}_{k}$, we have
\begin{align*}
& \mathrm{Lim}_{\gamma\in\Gamma}\mathrm{\ }\left(  \left(  \mathrm{Lim}%
_{\lambda\in\Lambda}\mathrm{\ }\mathcal{F}_{\lambda}\right)  \otimes
\mathcal{D}_{\gamma}\right) \\
& =\mathrm{Lim}_{\gamma\in\Gamma}\mathrm{\ }\left(  \mathrm{Lim}_{\lambda
\in\Lambda}\mathrm{\ }\left(  \mathcal{F}_{\lambda}\otimes\mathcal{D}_{\gamma
}\right)  \right) \\
& =\mathrm{Lim}_{\lambda\in\Lambda}\mathrm{\ }\left(  \mathrm{Lim}_{\gamma
\in\Gamma}\mathrm{\ }\left(  \mathcal{F}_{\lambda}\otimes\mathcal{D}_{\gamma
}\right)  \right) \\
& \text{[since double limits commute]}\\
& =\mathrm{Lim}_{\lambda\in\Lambda}\mathrm{\ }\left(  \mathcal{F}_{\lambda
}\otimes\left(  \mathrm{Lim}_{\gamma\in\Gamma}\mathrm{\ }\mathcal{D}_{\gamma
}\right)  \right) \\
& \text{[since }\mathcal{F}_{\lambda}\text{\ is microlinear]}\\
& =\left(  \mathrm{Lim}_{\lambda\in\Lambda}\mathrm{\ }\mathcal{F}_{\lambda
}\right)  \otimes\left(  \mathrm{Lim}_{\gamma\in\Gamma}\mathrm{\ }%
\mathcal{D}_{\gamma}\right)
\end{align*}
so that we have the desired result.
\end{proof}

\begin{proposition}
\label{t3.8}If $X$ is a Weil exponentiable and microlinear object in
$\mathcal{K}$, then so is $X^{Y}$ for any object $Y$ in $\mathcal{K}$.
\end{proposition}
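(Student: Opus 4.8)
The plan is to establish the two defining properties separately: that $X^{Y}$ is Weil exponentiable and that it is microlinear. The first of these is already in hand, since Proposition \ref{t3.4} asserts precisely that $X^{Y}$ is Weil exponentiable whenever $X$ is. Thus the entire burden of the argument falls on verifying microlinearity, and I would devote the proof to that alone.

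For microlinearity, the key observation is that Weil exponentiability lets me commute the functor $X\otimes(-)$ past exponentiation by $Y$. Concretely, specializing the Weil exponentiability of $X$ to the case $W_{1}=k$ yields the natural isomorphism (\ref{3.3}), namely $X^{Y}\otimes W\cong(X\otimes W)^{Y}$ for every Weil $k$-algebra $W$. Because this holds naturally in $W$, I read it as an isomorphism of functors $X^{Y}\otimes(-)\cong(-)^{Y}\circ(X\otimes(-))$ from $\mathbf{Weil}_{k}$ to $\mathcal{K}$. Hence, for any finite limit diagram $\mathcal{D}$ in $\mathbf{Weil}_{k}$, the diagram $X^{Y}\otimes\mathcal{D}$ is naturally isomorphic to the diagram obtained by first applying $X\otimes(-)$ and then exponentiating by $Y$.

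First I would invoke the microlinearity of $X$: it guarantees that $X\otimes\mathcal{D}$ is a limit diagram in $\mathcal{K}$. Then I would apply the functor $(-)^{Y}$ to this limit diagram. Since $\mathcal{K}$ is cartesian closed, $(-)^{Y}$ is right adjoint to $(-)\times Y$, and as a right adjoint it preserves all limits, in particular finite ones; this is the same left-exactness of exponentiation already exploited in the proofs of Propositions \ref{t3.3} and \ref{t3.4}. Consequently $(X\otimes\mathcal{D})^{Y}$ is again a limit diagram, and through the natural isomorphism above so is $X^{Y}\otimes\mathcal{D}$. This is exactly the microlinearity of $X^{Y}$.

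I do not anticipate a serious obstacle here; the argument is a routine combination of a naturality rewrite with the preservation of limits by a right adjoint. The one point that deserves care is confirming that the isomorphism (\ref{3.3}) is genuinely natural in $W$ as a morphism of diagrams, not merely a family of isomorphisms taken object by object, so that it transports the limiting cone over $X\otimes\mathcal{D}$ to a limiting cone over $X^{Y}\otimes\mathcal{D}$. Since the definition of Weil exponentiability already stipulates that (\ref{3.1}) holds naturally, this naturality is available, and the proof closes.
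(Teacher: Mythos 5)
Your proposal is correct and follows essentially the same route as the paper: Weil exponentiability of $X^{Y}$ is quoted from Proposition \ref{t3.4}, and microlinearity is obtained by using the natural isomorphism (\ref{3.3}) to rewrite $X^{Y}\otimes\mathcal{D}$ as $\left(  X\otimes\mathcal{D}\right)  ^{Y}$, then invoking the microlinearity of $X$ and the left exactness of the exponentiation functor $\left(  \cdot\right)  ^{Y}$. Your added remark on the naturality of (\ref{3.3}) in $W$ is a sound precaution and is covered by the definition of Weil exponentiability, which stipulates that (\ref{3.1}) holds naturally.
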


\begin{proof}
$X^{Y}$ is Weil exponentiable by Proposition \ref{t3.4}. Given a finite
diagram $\mathcal{D}:\Gamma\rightarrow\mathbf{Weil}_{k}$, we have
\begin{align*}
& \mathrm{Lim}_{\gamma\in\Gamma}\mathrm{\ }\left(  X^{Y}\otimes\mathcal{D}%
_{\gamma}\right) \\
& =\mathrm{Lim}_{\gamma\in\Gamma}\mathrm{\ }\left(  X\otimes\mathcal{D}%
_{\gamma}\right)  ^{Y}\\
& \text{[by (\ref{3.3})]}\\
& =\left(  \mathrm{Lim}_{\gamma\in\Gamma}\mathrm{\ }\left(  X\otimes
\mathcal{D}_{\gamma}\right)  \right)  ^{Y}\\
& \text{[since the exponentiation by }Y\text{\ is a left exact functor]}\\
& =\left(  X\otimes\left(  \mathrm{Lim}_{\gamma\in\Gamma}\mathrm{\ }%
\mathcal{D}_{\gamma}\right)  \right)  ^{Y}\\
& \text{[since }X\text{\ is microlinear]}\\
& =X^{Y}\otimes\left(  \mathrm{Lim}_{\gamma\in\Gamma}\mathrm{\ }%
\mathcal{D}_{\gamma}\right) \\
& \text{[by (\ref{3.3})]}%
\end{align*}
so that $X^{Y}$ is microlinear.
\end{proof}

Now we recapitulate as follows.

\begin{theorem}
\label{t3.9}The full subcategory $\mathcal{K}_{\mathbf{WE},\mathbf{ML}}$ of
all Weil exponentiable and microlinear objects in the category $\mathcal{K}$
is a left exact and cartesian closed category.
\end{theorem}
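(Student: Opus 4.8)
The plan is to establish Theorem \ref{t3.9} by the same strategy that yielded Theorem \ref{t3.5}, namely by verifying that the defining subcategory is closed under the two operations out of which all finite limits and exponentials in a cartesian closed left exact category are built: finite limits and exponentiation. Concretely, a full subcategory of a left exact cartesian closed category is itself left exact and cartesian closed as soon as it is closed under finite limits (so that it inherits the ambient finite limits, including the terminal object and all products and equalizers) and under the exponential functor $(-)^{Y}$ for objects $Y$ lying in the subcategory. So the entire content reduces to checking these two closure properties for the class of objects that are simultaneously Weil exponentiable and microlinear.

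First I would invoke the propositions already proved for the two properties separately. For closure under finite limits: if $\mathcal{F}:\Lambda\rightarrow\mathcal{K}$ is a finite diagram whose every vertex $\mathcal{F}_{\lambda}$ is Weil exponentiable and microlinear, then Proposition \ref{t3.3} shows $\mathrm{Lim}_{\lambda\in\Lambda}\mathcal{F}_{\lambda}$ is Weil exponentiable, and Proposition \ref{t3.7} shows it is microlinear; hence the limit again lies in $\mathcal{K}_{\mathbf{WE},\mathbf{ML}}$. For closure under exponentiation: if $X$ is Weil exponentiable and microlinear and $Y$ is an arbitrary object of $\mathcal{K}$, then Proposition \ref{t3.4} gives that $X^{Y}$ is Weil exponentiable and Proposition \ref{t3.8} gives that $X^{Y}$ is microlinear, so $X^{Y}$ again belongs to the subcategory. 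It is worth remarking that exponentiation preserves the conjunction of the two properties for an arbitrary base $Y$, which is even stronger than what is strictly needed (one only needs $Y\in\mathcal{K}_{\mathbf{WE},\mathbf{ML}}$), so there is ample room.

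With both closure properties in hand, the conclusion is immediate: the inclusion of $\mathcal{K}_{\mathbf{WE},\mathbf{ML}}$ into $\mathcal{K}$ creates finite limits, whence $\mathcal{K}_{\mathbf{WE},\mathbf{ML}}$ is left exact, and the ambient exponential object $X^{Y}$ restricts to an exponential object within the subcategory because $X^{Y}$ lies in the subcategory whenever $X$ does and the required adjunction isomorphism $\mathrm{Hom}(Z\times Y, X)\cong\mathrm{Hom}(Z, X^{Y})$ is simply inherited from $\mathcal{K}$ (the product $Z\times Y$ of subcategory objects also lying in the subcategory by the finite-limit closure). Thus $\mathcal{K}_{\mathbf{WE},\mathbf{ML}}$ is left exact and cartesian closed.

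Honestly, there is no genuine obstacle left at this stage: all the real work has been discharged in Propositions \ref{t3.3}, \ref{t3.4}, \ref{t3.7}, and \ref{t3.8}, and this theorem is a pure recapitulation, exactly as the phrase preceding its statement suggests. The only point demanding a modicum of care is the bookkeeping that ``left exact and cartesian closed'' for a full subcategory really does follow from closure under finite limits and exponentials — that the inherited limit cones and the inherited exponential adjunction are genuinely limits and exponentials in the subcategory, which holds precisely because the subcategory is \emph{full}. One could spell this out, but given the earlier precedent of Theorem \ref{t3.5}, a one-line proof citing the four propositions is entirely appropriate.
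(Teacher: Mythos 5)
Your proof is correct and matches the paper's intent exactly: the theorem is stated as a recapitulation, and its (implicit, one-line) justification is precisely the combination of Propositions \ref{t3.3}, \ref{t3.4}, \ref{t3.7}, and \ref{t3.8} establishing closure of $\mathcal{K}_{\mathbf{WE},\mathbf{ML}}$ under finite limits and exponentiation, just as Theorem \ref{t3.5} was deduced from Propositions \ref{t3.3} and \ref{t3.4}. Your additional remark that fullness is what makes the inherited limit cones and exponential adjunction genuine in the subcategory is a correct and worthwhile clarification of a point the paper leaves tacit.
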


\section{\label{s4}Fibered Microlinear Objects}

Now we are going to talk about fibered manifolds in our context. First of all,
we note that

\begin{proposition}
\label{t4.1}The quadruple $\left(  \overrightarrow{\mathcal{K}}%
,\overrightarrow{\mathbb{R}},\overrightarrow{\mathbf{T}},\overrightarrow
{\alpha}\right)  $\ is a DG-category, where

\begin{enumerate}
\item $\overrightarrow{\mathcal{K}}$ is the arrow category of $\mathcal{K}$.

\item $\overrightarrow{\mathbb{R}}$ stands for
\[%
\begin{array}
[c]{c}%
\mathbb{R}\\
\downarrow\\
1
\end{array}
\]

\item Given a Weil $k$-algebra $W$, $\overrightarrow{\mathbf{T}}^{W}\left(
\begin{array}
[c]{c}%
E\\%
\begin{array}
[c]{cc}%
\pi & \downarrow
\end{array}
\\
M
\end{array}
\right)  $ is
\[%
\begin{array}
[c]{c}%
\mathbf{T}^{W}\left(  E\right) \\%
\begin{array}
[c]{cc}%
\mathbf{T}^{W}\left(  \pi\right)  & \downarrow
\end{array}
\\
\mathbf{T}^{W}\left(  M\right)
\end{array}
\]
while $\overrightarrow{\mathbf{T}}^{W}\left(
\begin{array}
[c]{ccc}%
E_{1} & \underrightarrow{\quad f\quad} & E_{2}\\%
\begin{array}
[c]{cc}%
\pi_{1} & \downarrow
\end{array}
&  &
\begin{array}
[c]{cc}%
\downarrow & \pi_{2}%
\end{array}
\\
M_{1} & \underrightarrow{\quad\overline{f}\quad} & M_{2}%
\end{array}
\right)  $ is
\[%
\begin{array}
[c]{ccc}%
\mathbf{T}^{W}\left(  E_{1}\right)  & \underrightarrow{\quad\mathbf{T}%
^{W}\left(  f\right)  \quad} & \mathbf{T}^{W}\left(  E_{2}\right) \\%
\begin{array}
[c]{cc}%
\mathbf{T}^{W}\left(  \pi_{1}\right)  & \downarrow
\end{array}
&  &
\begin{array}
[c]{cc}%
\downarrow & \mathbf{T}^{W}\left(  \pi_{2}\right)
\end{array}
\\
\mathbf{T}^{W}\left(  M_{1}\right)  & \underrightarrow{\quad\mathbf{T}%
^{W}\left(  \overline{f}\right)  \quad} & \mathbf{T}^{W}\left(  M_{2}\right)
\end{array}
\]

\item Given a morphism $\varphi:W_{1}\rightarrow W_{2}$ in the category
$\mathbf{Weil}_{k}$, $\overrightarrow{\alpha}_{\varphi}\left(
\begin{array}
[c]{c}%
E\\%
\begin{array}
[c]{cc}%
\pi & \downarrow
\end{array}
\\
M
\end{array}
\right)  $ is
\[%
\begin{array}
[c]{ccc}%
\mathbf{T}^{W_{1}}\left(  E\right)  & \underrightarrow{\alpha_{\varphi}\left(
E\right)  } & \mathbf{T}^{W_{2}}\left(  E\right) \\%
\begin{array}
[c]{cc}%
\mathbf{T}^{W_{1}}\left(  \pi\right)  & \downarrow
\end{array}
&  &
\begin{array}
[c]{cc}%
\downarrow & \mathbf{T}^{W_{2}}\left(  \pi\right)
\end{array}
\\
\mathbf{T}^{W_{1}}\left(  M\right)  & \overrightarrow{\alpha_{\varphi}\left(
M\right)  } & \mathbf{T}^{W_{2}}\left(  M\right)
\end{array}
\]

\end{enumerate}
\end{proposition}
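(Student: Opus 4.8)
The plan is to verify the six defining conditions of a DG-category for $\left(\overrightarrow{\mathcal{K}},\overrightarrow{\mathbb{R}},\overrightarrow{\mathbf{T}},\overrightarrow{\alpha}\right)$ one at a time, exploiting throughout the elementary observation that finite limits in the arrow category $\overrightarrow{\mathcal{K}}$ — and in particular its terminal object $1\rightarrow1$ — are computed componentwise from those of $\mathcal{K}$. Since a morphism in $\overrightarrow{\mathcal{K}}$ is a commutative square, naturality, commutativity of the relevant diagrams, and preservation of finite limits may all be checked slot by slot. I would first record that each square written down in items 3 and 4 of the statement does commute in $\mathcal{K}$: the square defining $\overrightarrow{\mathbf{T}}^{W}$ on a morphism commutes because $\mathbf{T}^{W}$ is a functor, while the square defining $\overrightarrow{\alpha}_{\varphi}$ on an object is exactly the naturality square of $\alpha_{\varphi}$ at $\pi$, so that $\overrightarrow{\mathbf{T}}^{W}$ and $\overrightarrow{\alpha}_{\varphi}$ genuinely land in $\overrightarrow{\mathcal{K}}$.

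Granting componentwise computation, conditions 2 through 6 reduce to the properties already assumed for $\left(\mathcal{K},\mathbb{R},\mathbf{T},\alpha\right)$. For condition 2, a $k$-algebra object in $\overrightarrow{\mathcal{K}}$ is precisely a $k$-algebra homomorphism, and the unique arrow $\mathbb{R}\rightarrow1$ into the (trivially algebraic) terminal object is such, so $\overrightarrow{\mathbb{R}}$ is a commutative $k$-algebra object. For condition 3, $\overrightarrow{\mathbf{T}}^{W}$ is left exact because it acts componentwise through the left exact $\mathbf{T}^{W}$ and limits are componentwise, while $\overrightarrow{\mathbf{T}}^{k}=\mathrm{id}$ and $\overrightarrow{\mathbf{T}}^{W_{2}}\circ\overrightarrow{\mathbf{T}}^{W_{1}}=\overrightarrow{\mathbf{T}}^{W_{1}\otimes_{k}W_{2}}$ are immediate from the same identities for $\mathbf{T}$. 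Condition 5 follows by applying $\alpha_{\varphi}$ in each slot. In conditions 4 and 6 the only point deserving a remark is the terminal slot: since $\mathbf{T}^{W}$ preserves the terminal object and the functor $\cdot\,\underline{\otimes}\,W$ — being built from finite limits, hence itself computed componentwise in $\overrightarrow{\mathcal{K}}$ — sends the terminal $k$-algebra object $1$ to $1$, one obtains $\overrightarrow{\mathbf{T}}^{W}\overrightarrow{\mathbb{R}}=\left(\mathbb{R}\underline{\otimes}W\rightarrow1\right)=\overrightarrow{\mathbb{R}}\underline{\otimes}W$, and likewise $\overrightarrow{\alpha}_{\varphi}(\overrightarrow{\mathbb{R}})=\overrightarrow{\mathbb{R}}\underline{\otimes}\varphi$.

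The genuine obstacle is condition 1, that $\overrightarrow{\mathcal{K}}$ be not merely left exact but also cartesian closed, for unlike left exactness this is not componentwise. I would construct the exponential of $f:A_{0}\rightarrow A_{1}$ and $g:B_{0}\rightarrow B_{1}$ by hand. The target slot should be $B_{1}^{A_{1}}$, whereas the source slot must simultaneously remember a map into $B_{0}^{A_{0}}$ together with its compatibility with $g$ and $f$; accordingly I would take the source to be the pullback of $g_{\ast}:B_{0}^{A_{0}}\rightarrow B_{1}^{A_{0}}$ (post-composition with $g$) against $f^{\ast}:B_{1}^{A_{1}}\rightarrow B_{1}^{A_{0}}$ (pre-composition with $f$), and let the structure arrow of the exponential be the projection onto $B_{1}^{A_{1}}$. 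Checking the adjunction $\overrightarrow{\mathcal{K}}(h\times f,g)\cong\overrightarrow{\mathcal{K}}(h,g^{f})$ then amounts to currying both slots of a commuting square inside the cartesian closed $\mathcal{K}$ and observing that commutativity of the square is exactly the defining condition of the pullback. This uses only exponentials and finite limits of $\mathcal{K}$; equivalently one may cite the standard fact that $\left[\mathbf{2},\mathcal{K}\right]$, with $\mathbf{2}=\{0\rightarrow1\}$ the two-object poset, is cartesian closed once $\mathcal{K}$ is left exact and cartesian closed. I expect this exponential and the verification of its universal property to carry essentially all the content of the proposition, the other five conditions being routine componentwise checks.
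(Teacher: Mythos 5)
Your proposal is correct and takes essentially the same route as the paper, whose proof simply cites Theorem 7.5.2 of Schubert for the componentwise computation of finite limits (yielding left exactness of $\overrightarrow{\mathcal{K}}$ and of each $\overrightarrow{\mathbf{T}}^{W}$ at once) and Exercise 1.3.7 of Jacobs for cartesian closedness, declaring the remaining conditions easy to verify; you merely unpack those citations and the routine componentwise checks. In particular, the pullback exponential you construct, with target slot $M^{N}$ and source slot the pullback of post-composition against pre-composition, is exactly the description the paper records in the remark immediately following the proposition.
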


\begin{proof}
That the category $\overrightarrow{\mathcal{K}}$\ is left exact and that the
functor $\overrightarrow{\mathbf{T}}^{W}:\overrightarrow{\mathcal{K}%
}\rightarrow\overrightarrow{\mathcal{K}}$\ is left exact follow at the same
time from Theorem 7.5.2 in \cite{sch}. That the category $\overrightarrow
{\mathcal{K}}$\ is cartesian closed follows from Exercise 1.3.7 in \cite{j1}.
The other conditions for $\left(  \overrightarrow{\mathcal{K}},\overrightarrow
{\mathbb{R}},\overrightarrow{\mathbf{T}},\overrightarrow{\alpha}\right)  $\ to
be a DG-category are easy to verify.
\end{proof}

\begin{corollary}
\label{t4.2}$%
\begin{array}
[c]{c}%
E\\%
\begin{array}
[c]{cc}%
\pi & \downarrow
\end{array}
\\
M
\end{array}
\in\overrightarrow{\mathcal{K}}$ is microlinear with respect to the
DG-category $\left(  \overrightarrow{\mathcal{K}},\overrightarrow{\mathbb{R}%
},\overrightarrow{\mathbf{T}},\overrightarrow{\alpha}\right)  $ iff both
$E$\ and $M$\ are microlinear with respect to the DG-category $\left(
\mathcal{K},\mathbb{R},\mathbf{T},\alpha\right)  $.
\end{corollary}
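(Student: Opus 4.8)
The plan is to exploit the observation that a finite-limit diagram in the arrow category $\overrightarrow{\mathcal{K}}$ is computed componentwise: limits in $\overrightarrow{\mathcal{K}}$ are inherited from limits in $\mathcal{K}$ taken separately on the domain (total space) component and on the codomain (base) component. By the construction of $\overrightarrow{\mathbf{T}}^{W}$ in Proposition \ref{t4.1}, the action $\overrightarrow{\mathbf{T}}^{W}$ on an object $\pi:E\to M$ is simply $\mathbf{T}^{W}$ applied separately to $E$, to $M$, and to $\pi$; likewise $\overrightarrow{\alpha}_{\varphi}$ acts as $\alpha_{\varphi}$ on each component. Hence for any diagram $\mathcal{D}$ in $\mathbf{Weil}_{k}$, the diagram $(\pi)\otimes\mathcal{D}$ in $\overrightarrow{\mathcal{K}}$ has the property that its domain component is precisely $E\otimes\mathcal{D}$ and its codomain component is precisely $M\otimes\mathcal{D}$, both taken in $\mathcal{K}$ with respect to the original DG-category.

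First I would make precise the componentwise nature of finite limits in $\overrightarrow{\mathcal{K}}$. Concretely, given a finite diagram $\mathcal{E}:\Gamma\to\overrightarrow{\mathcal{K}}$, write each $\mathcal{E}_{\gamma}$ as an arrow $E_{\gamma}\to M_{\gamma}$; then $\mathrm{Lim}_{\gamma}\,\mathcal{E}_{\gamma}$ is the arrow $\mathrm{Lim}_{\gamma}\,E_{\gamma}\to\mathrm{Lim}_{\gamma}\,M_{\gamma}$, with the two limits formed in $\mathcal{K}$. This is exactly the content of the componentwise computation of limits in an arrow (functor) category, and it is the same fact invoked via Theorem 7.5.2 of \cite{sch} in the proof of Proposition \ref{t4.1}. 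I would state it as the crux, since everything else reduces to tracking the two components in parallel.

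Second I would run the equivalence in both directions. For the forward direction, suppose $\pi:E\to M$ is microlinear in $\overrightarrow{\mathcal{K}}$. Given a finite limit diagram $\mathcal{D}$ in $\mathbf{Weil}_{k}$, by hypothesis $(\pi)\otimes\mathcal{D}$ is a limit diagram in $\overrightarrow{\mathcal{K}}$; applying the componentwise description, its domain component $E\otimes\mathcal{D}$ and its codomain component $M\otimes\mathcal{D}$ are limit diagrams in $\mathcal{K}$, so both $E$ and $M$ are microlinear. For the converse, suppose $E$ and $M$ are each microlinear in $\mathcal{K}$. Given a finite limit diagram $\mathcal{D}$ in $\mathbf{Weil}_{k}$, both $E\otimes\mathcal{D}$ and $M\otimes\mathcal{D}$ are limit diagrams in $\mathcal{K}$; reassembling componentwise, $(\pi)\otimes\mathcal{D}$ is a limit diagram in $\overrightarrow{\mathcal{K}}$, so $\pi$ is microlinear.

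I expect the only genuine obstacle to be the careful verification that the componentwise-limit identification respects the Weil action, i.e. that forming $\overrightarrow{\mathbf{T}}^{W}$ and $\overrightarrow{\alpha}_{\varphi}$ really does commute with passing to the domain and codomain components, so that ``$(\pi)\otimes\mathcal{D}$'' genuinely decomposes as the pair $(E\otimes\mathcal{D},\,M\otimes\mathcal{D})$ with the arrow-component limits matching up. Once Proposition \ref{t4.1}'s explicit formulas are unwound this is a bookkeeping matter rather than a conceptual one, but it is the step that must be written out with care, since the whole equivalence rests on the limit in $\overrightarrow{\mathcal{K}}$ being nothing more than the pair of limits in $\mathcal{K}$ glued along the induced arrow. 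Everything else is then a direct application of the definition of microlinearity together with this decomposition.
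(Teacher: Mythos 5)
Your proof is correct and is exactly the argument the paper intends: Corollary \ref{t4.2} is stated without an explicit proof as an immediate consequence of Proposition \ref{t4.1}, resting on precisely the fact you spell out, namely that finite limits in $\overrightarrow{\mathcal{K}}$ are computed componentwise (the appeal to Theorem 7.5.2 of \cite{sch}) and that $\overrightarrow{\mathbf{T}}^{W}$ and $\overrightarrow{\alpha}_{\varphi}$ act componentwise, so that $\pi\otimes\mathcal{D}$ is a limit diagram in $\overrightarrow{\mathcal{K}}$ iff both $E\otimes\mathcal{D}$ and $M\otimes\mathcal{D}$ are limit diagrams in $\mathcal{K}$. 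The only point worth making explicit in the ``only if'' direction is that reflecting limits componentwise uses the left exactness of $\mathcal{K}$ (the pointwise limits exist, so a limiting cone in the arrow category must be componentwise isomorphic to them), which holds by hypothesis and which your write-up correctly treats as the crux.
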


\begin{remark}
\label{r4.1}Given two objects $\pi:E\rightarrow M$ and $\theta:F\rightarrow N$
in the category $\overrightarrow{\mathcal{K}}$, their exponential $\pi
^{\theta}:\left(  E^{F}\right)  _{\mathbf{P}}\rightarrow M^{N}$ is determined
by the pullback diagram
\[%
\begin{array}
[c]{ccc}%
\left(  E^{F}\right)  _{\mathbf{P}} & \rightarrow & E^{F}\\
\downarrow &  & \downarrow\\
M^{N} & \rightarrow & M^{F}%
\end{array}
\]
where $\mathbf{P}$\ stands for ''Projectable (into $M^{N}$)''.
\end{remark}

\begin{proposition}
\label{t4.8}Given a morphism $\pi:E\rightarrow M$ in the category
$\mathcal{K}$, if both $E$ and $M$\ are Weil exponentiable as objects in the
category $\mathcal{K}$, then $\pi:E\rightarrow M$\ is Weil exponentiable as an
object in the category $\overrightarrow{\mathcal{K}}$.
\end{proposition}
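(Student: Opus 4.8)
The plan is to sidestep a direct computation of the arrow-category exponential $\pi^{\theta}$ and instead realize $\pi:E\rightarrow M$ as a finite limit in $\overrightarrow{\mathcal{K}}$ of objects that are already Weil exponentiable, whereupon Proposition \ref{t3.3}, applied to the DG-category $\left(\overrightarrow{\mathcal{K}},\overrightarrow{\mathbb{R}},\overrightarrow{\mathbf{T}},\overrightarrow{\alpha}\right)$ of Proposition \ref{t4.1}, finishes the job. The two building blocks I would use are, for an object $X$ of $\mathcal{K}$, the arrow-category objects $\nabla X:=(X\xrightarrow{\mathrm{id}}X)$ and $\kappa X:=(X\rightarrow 1)$, where $1$ is the terminal object of $\mathcal{K}$. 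My first claim is that whenever $X$ is Weil exponentiable in $\mathcal{K}$, both $\nabla X$ and $\kappa X$ are Weil exponentiable in $\overrightarrow{\mathcal{K}}$.

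To verify this claim I would compute both sides of the Weil-exponentiability identity for $\nabla X$ and for $\kappa X$ inside $\overrightarrow{\mathcal{K}}$. On the Weil-functor side, Proposition \ref{t4.1} gives $\nabla X\otimes W=\nabla(X\otimes W)$ and, because $\mathbf{T}^{W}$ preserves the terminal object, $\kappa X\otimes W=\kappa(X\otimes W)$. On the exponential side, Remark \ref{r4.1} applied to an arbitrary $\theta:F\rightarrow N$ makes the defining pullback degenerate: for $\nabla X$ the right-hand map $\pi^{F}$ of that pullback is an identity, so $(\nabla X)^{\theta}=\nabla(X^{N})$, while for $\kappa X$ the base $M^{N}$ is terminal, so $(\kappa X)^{\theta}=\kappa(X^{F})$. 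Substituting these formulas, the identity (\ref{3.1}) for $\nabla X$ reduces to (\ref{3.1}) for $X$ with $Y=N$, and for $\kappa X$ to (\ref{3.1}) for $X$ with $Y=F$; both hold since $X$ is Weil exponentiable.

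Next I would present $\pi$ as the pullback in $\overrightarrow{\mathcal{K}}$ of the cospan
\[
\kappa E\xrightarrow{(\pi,\,\mathrm{id}_{1})}\kappa M\xleftarrow{(\mathrm{id}_{M},\,!)}\nabla M.
\]
Because limits in $\overrightarrow{\mathcal{K}}$ are formed componentwise, the pullback on domains is $E\times_{M}M\cong E$, the pullback on codomains is $1\times_{1}M\cong M$, and the induced structure map is exactly $\pi$; hence this pullback is the object $\pi$ itself. Its three vertices $\kappa E$, $\kappa M$ and $\nabla M$ are Weil exponentiable by the first claim, since $E$ and $M$ are Weil exponentiable by hypothesis, so Proposition \ref{t3.3} yields that $\pi$ is Weil exponentiable, as desired.

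The step I expect to be the real work is the first claim, namely the degeneration of the arrow-category exponentials $(\nabla X)^{\theta}$ and $(\kappa X)^{\theta}$ via Remark \ref{r4.1}; once these reductions are in hand everything else is formal. A more pedestrian alternative would be to attack the identity directly: expand both $(\pi\otimes(W_{1}\otimes_{k}W_{2}))^{\theta}$ and $(\pi\otimes W_{1})^{\theta}\otimes W_{2}$ into pullbacks in $\mathcal{K}$ using Remark \ref{r4.1}, push the Weil functor $\otimes W_{2}$ inside the pullback by left exactness of $\mathbf{T}^{W_{2}}$, and identify the three corners by the Weil exponentiability of $E$ and $M$. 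That route is workable, but its delicate point is checking that the comparison isomorphisms from (\ref{3.1}) are compatible with both structure maps of the pullback, the one induced by $\pi$ and the one induced by $\theta$; this is precisely the naturality burden that the limit argument above discharges through Proposition \ref{t3.3}.
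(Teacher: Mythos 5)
Your proof is correct, but it follows a genuinely different route from the paper's. The paper argues directly on the pullback of Remark \ref{r4.1} defining the exponential $\left(\pi\otimes W_{1}\right)^{\theta}$ in $\overrightarrow{\mathcal{K}}$: it applies the left exact functor $\cdot\otimes W_{2}$ to that pullback square, identifies each of the three corners $\left(E\otimes W_{1}\right)^{F}\otimes W_{2}$, $\left(M\otimes W_{1}\right)^{N}\otimes W_{2}$, $\left(M\otimes W_{1}\right)^{F}\otimes W_{2}$ with the corresponding corner of the pullback defining $\left(\pi\otimes\left(W_{1}\otimes_{k}W_{2}\right)\right)^{\theta}$ via the Weil exponentiability of $E$ and $M$, and concludes that the two pullback vertices coincide. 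That is precisely the ``pedestrian alternative'' you sketch at the end, including its delicate point: the paper leaves implicit the check that the corner identifications commute with the two legs of the cospan. Your route instead exhibits $\pi$ as the pullback of $\kappa E\rightarrow\kappa M\leftarrow\nabla M$ in $\overrightarrow{\mathcal{K}}$ (correct, since limits there are computed componentwise and $E\times_{M}M\cong E$, $1\times_{1}M\cong M$ with induced map $\pi$), verifies Weil exponentiability of $\nabla X$ and $\kappa X$ by degenerating the pullback of Remark \ref{r4.1} (also correct: pulling back along an identity gives $\left(\nabla X\right)^{\theta}=\nabla\left(X^{N}\right)$, and pulling back over the terminal base gives $\left(\kappa X\right)^{\theta}=\kappa\left(X^{F}\right)$, so (\ref{3.1}) in $\overrightarrow{\mathcal{K}}$ reduces to (\ref{3.1}) in $\mathcal{K}$ with $Y=N$ and $Y=F$ respectively), and then invokes Proposition \ref{t3.3} for the DG-category of Proposition \ref{t4.1}. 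What your argument buys is exactly what you claim: the compatibility of the comparison isomorphisms with the structure maps is discharged once and for all by the closure of Weil exponentiability under finite limits, instead of being re-verified on this particular pullback. What the paper's argument buys is brevity and independence from the componentwise description of limits in $\overrightarrow{\mathcal{K}}$ and from the two degeneration computations; it needs only Remark \ref{r4.1} and the left exactness of $\mathbf{T}^{W_{2}}$. Both proofs rest on Proposition \ref{t4.1} for the DG-structure on $\overrightarrow{\mathcal{K}}$.
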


\begin{proof}
As we have noted, $\left(  \left(  E\otimes W_{1}\right)  ^{F}\right)
_{\mathbf{P}}$ is obtained as the pullback of the diagram
\begin{equation}%
\begin{array}
[c]{ccc}%
\left(  \left(  E\otimes W_{1}\right)  ^{F}\right)  _{\mathbf{P}} &
\rightarrow & \left(  E\otimes W_{1}\right)  ^{F}\\
\downarrow &  & \downarrow\\
\left(  M\otimes W_{1}\right)  ^{N} & \rightarrow & \left(  M\otimes
W_{1}\right)  ^{F}%
\end{array}
\label{4.1}%
\end{equation}
Since the functor $\cdot\otimes W_{2}:\mathcal{K}\rightarrow\mathcal{K}$ is
left exact, the diagram obtained from (\ref{4.1}) by the application of the
functor
\begin{equation}%
\begin{array}
[c]{ccc}%
\left(  \left(  E\otimes W_{1}\right)  ^{F}\right)  _{\mathbf{P}}\otimes W_{2}
& \rightarrow &
\begin{array}
[c]{c}%
\left(  E\otimes W_{1}\right)  ^{F}\otimes W_{2}\\
=\left(  E\otimes\left(  W_{1}\otimes_{k}W_{2}\right)  \right)  ^{F}%
\end{array}
\\
\downarrow &  & \downarrow\\%
\begin{array}
[c]{c}%
\left(  M\otimes W_{1}\right)  ^{N}\otimes W_{2}\\
=\left(  M\otimes\left(  W_{1}\otimes_{k}W_{2}\right)  \right)  ^{N}%
\end{array}
& \rightarrow &
\begin{array}
[c]{c}%
\left(  M\otimes W_{1}\right)  ^{F}\otimes W_{2}\\
=\left(  M\otimes\left(  W_{1}\otimes_{k}W_{2}\right)  \right)  ^{F}%
\end{array}
\end{array}
\label{4.2}%
\end{equation}
is a pullback diagram. However the diagram
\begin{equation}%
\begin{array}
[c]{ccc}%
\left(  \left(  E\otimes\left(  W_{1}\otimes_{k}W_{2}\right)  \right)
^{F}\right)  _{\mathbf{P}} & \rightarrow & \left(  E\otimes\left(
W_{1}\otimes_{k}W_{2}\right)  \right)  ^{F}\\
\downarrow &  & \downarrow\\
\left(  M\otimes\left(  W_{1}\otimes_{k}W_{2}\right)  \right)  ^{N} &
\rightarrow & \left(  M\otimes\left(  W_{1}\otimes_{k}W_{2}\right)  \right)
^{F}%
\end{array}
\label{4.3}%
\end{equation}
is a pullback diagram. Therefore we have
\[
\left(  \left(  E\otimes\left(  W_{1}\otimes_{k}W_{2}\right)  \right)
^{F}\right)  _{\mathbf{P}}=\left(  \left(  E\otimes W_{1}\right)  ^{F}\right)
_{\mathbf{P}}\otimes W_{2}%
\]
which is the desired result.
\end{proof}

\begin{definition}
By a \underline{fibered microlinear object in $\mathcal{K}$} we mean simply an
object $\pi:E\rightarrow M$ in the category $\overrightarrow{\mathcal{K}} $
which is Weil exponentiable and microlinear with respet to the DG-structure
$\left(  \overrightarrow{\mathcal{K}},\overrightarrow{\mathbb{R}%
},\overrightarrow{\mathbf{T}},\overrightarrow{\alpha}\right)  $.
\end{definition}

\begin{notation}
The full subcategory of $\overrightarrow{\mathcal{K}}$ consisting of all fiber
bundles in $\mathcal{K}\ $is denoted by $\mathcal{K}_{\mathbf{Fib}}$.
\end{notation}

\begin{theorem}
\label{t4.9}The category $\mathcal{K}_{\mathbf{Fib}}$\ is left exact and
cartesian closed.
\end{theorem}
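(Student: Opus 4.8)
The plan is to recognize Theorem \ref{t4.9} as nothing more than an instance of Theorem \ref{t3.9}, applied not to the DG-category fixed at the start of Section \ref{s3} but to the arrow DG-category $\left(\overrightarrow{\mathcal{K}},\overrightarrow{\mathbb{R}},\overrightarrow{\mathbf{T}},\overrightarrow{\alpha}\right)$ supplied by Proposition \ref{t4.1}. Indeed, unwinding the definition of a fibered microlinear object, the category $\mathcal{K}_{\mathbf{Fib}}$ is by construction precisely the full subcategory of $\overrightarrow{\mathcal{K}}$ whose objects are Weil exponentiable and microlinear with respect to this arrow DG-structure; in the notation of Theorem \ref{t3.9} it is exactly $\left(\overrightarrow{\mathcal{K}}\right)_{\mathbf{WE},\mathbf{ML}}$. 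So there is, in the end, essentially nothing to compute: the whole content of the theorem is the observation that the general result of Section \ref{s3} may be specialized to a different DG-category.

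First I would invoke Proposition \ref{t4.1} to record that $\left(\overrightarrow{\mathcal{K}},\overrightarrow{\mathbb{R}},\overrightarrow{\mathbf{T}},\overrightarrow{\alpha}\right)$ genuinely satisfies all six clauses of the definition of a DG-category, so that in particular $\overrightarrow{\mathcal{K}}$ is itself left exact and cartesian closed. Next I would note that the entire chain of results leading to Theorem \ref{t3.9} — namely Propositions \ref{t3.2}, \ref{t3.3}, \ref{t3.4}, \ref{t3.6}, \ref{t3.7} and \ref{t3.8} — was derived from the DG-category axioms alone. Applying Theorem \ref{t3.9} to the arrow DG-category then yields at once that $\mathcal{K}_{\mathbf{Fib}}=\left(\overrightarrow{\mathcal{K}}\right)_{\mathbf{WE},\mathbf{ML}}$ is left exact and cartesian closed, which is the assertion. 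Notably, one need not invoke the concrete descriptions in Remark \ref{r4.1}, Proposition \ref{t4.8} and Corollary \ref{t4.2} at all for the theorem itself, although those results do illuminate what the objects of $\mathcal{K}_{\mathbf{Fib}}$ look like in practice.

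The only genuine subtlety — rather than a real obstacle — lies in the legitimacy of this transfer. Since Theorem \ref{t3.9} was phrased for the DG-category fixed throughout the paper, I must confirm that its proof nowhere exploits special features of that particular DG-category. Inspecting the lemmas, each is deduced solely from the defining equation (\ref{3.1}) of Weil exponentiability, the definition of microlinearity, the compositional law (\ref{3.4}), and the left exactness of the functors $\mathbf{T}^{W}$ and of exponentiation by a fixed object — every one of which is guaranteed for the arrow DG-category by Proposition \ref{t4.1}. Hence the transfer is sound, no new computation is required, and Theorem \ref{t4.9} follows immediately.
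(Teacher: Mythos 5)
Your proposal matches the paper's own proof, which consists of the single line that the theorem ``follows directly from Theorem \ref{t3.9}''; you have simply made explicit the (correct) justification that Theorem \ref{t3.9} applies to any DG-category and hence to the arrow DG-category of Proposition \ref{t4.1}. No issues.
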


\begin{proof}
This follows directly from Theorem \ref{t3.9}.
\end{proof}

\section{\label{s5}Vertical Constructions}

Now we are going to discuss vertical bundles in our context.

\begin{definition}
Given a morphism $\pi:E\rightarrow M$ in the category $\mathcal{K}$ and a Weil
$k$-algebra $W$, the \underline{vertical bundle} $\tau_{W}^{\mathbf{V}}\left(
\pi\right)  :\mathbf{V}^{W}\left(  \pi\right)  \rightarrow E$\ of $\pi$\ with
respect to $W$\ is defined to be
\[
\tau_{W}^{\mathbf{V}}\left(  \pi\right)  =\tau_{W}\left(  E\right)
\circ\widetilde{\tau}_{W}^{\mathbf{V}}\left(  \pi\right)
\]
where $\widetilde{\tau}_{W}^{\mathbf{V}}\left(  \pi\right)  :\mathbf{V}%
^{W}\left(  \pi\right)  \rightarrow E\otimes W$ is obtained as the equalizer
of
\[
E\otimes W\underrightarrow{\quad\pi\otimes\mathrm{id}_{W}\quad}M\otimes W
\]
and
\[
E\otimes W\underrightarrow{\quad\tau_{W,E}\quad}E\underrightarrow{\quad
\pi\quad}M\underrightarrow{\quad\iota_{W,M}\quad}M\otimes W
\]

\end{definition}

\begin{notation}
We will often write $E\otimes^{\perp}W$ for $\mathbf{V}^{W}\left(  \pi\right)
$.
\end{notation}

\begin{lemma}
\label{t4.3}Given a diagram in a left exact category $\mathcal{J}$%
\[%
\begin{array}
[c]{ccc}%
Z_{1} &  & Z_{2}\\
\downarrow &  & \downarrow\\
X_{1} & \underrightarrow{\quad f\quad} & X_{2}\\%
\begin{array}
[c]{ccc}%
g_{1} & \downdownarrows & h_{1}%
\end{array}
&  &
\begin{array}
[c]{ccc}%
g_{2} & \downdownarrows & h_{2}%
\end{array}
\\
Y_{1} & \underrightarrow{\quad\overline{f}\quad} & Y_{2}%
\end{array}
\]
if both of the two diagrams
\[%
\begin{array}
[c]{c}%
Z_{1}\\
\downarrow\\
X_{1}\\%
\begin{array}
[c]{ccc}%
g_{1} & \downdownarrows & h_{1}%
\end{array}
\\
Y_{1}%
\end{array}
\]
\[%
\begin{array}
[c]{c}%
Z_{2}\\
\downarrow\\
X_{2}\\%
\begin{array}
[c]{ccc}%
g_{2} & \downdownarrows & h_{2}%
\end{array}
\\
Y_{2}%
\end{array}
\]
are equalizers, and if both of the diagrams
\[%
\begin{array}
[c]{ccc}%
X_{1} & \underrightarrow{\quad f\quad} & X_{2}\\%
\begin{array}
[c]{cc}%
g_{1} & \downarrow
\end{array}
&  &
\begin{array}
[c]{cc}%
\downarrow & g_{2}%
\end{array}
\\
Y_{1} & \underrightarrow{\quad\overline{f}\quad} & Y_{2}%
\end{array}
\]
\[%
\begin{array}
[c]{ccc}%
X_{1} & \underrightarrow{\quad f\quad} & X_{2}\\%
\begin{array}
[c]{cc}%
h_{1} & \downarrow
\end{array}
&  &
\begin{array}
[c]{cc}%
\downarrow & h_{2}%
\end{array}
\\
Y_{1} & \underrightarrow{\quad\overline{f}\quad} & Y_{2}%
\end{array}
\]
commute, then there is a unique morphism $Z_{1}\dashrightarrow Z_{2}$\ making
the diagram
\[%
\begin{array}
[c]{ccc}%
Z_{1} & \dashrightarrow & Z_{2}\\
\downarrow &  & \downarrow\\
X_{1} & \underrightarrow{\quad f\quad} & X_{2}%
\end{array}
\]
commutative.
\end{lemma}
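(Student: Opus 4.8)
The plan is to obtain the dashed arrow $Z_{1}\dashrightarrow Z_{2}$ by a single application of the universal property of the equalizer sitting in the right-hand column. Write $e_{1}:Z_{1}\rightarrow X_{1}$ and $e_{2}:Z_{2}\rightarrow X_{2}$ for the two vertical morphisms, so that by hypothesis $e_{1}$ is the equalizer of the parallel pair $g_{1},h_{1}$ and $e_{2}$ is the equalizer of $g_{2},h_{2}$. The candidate for the dashed arrow will be the unique factorization of the composite $f\circ e_{1}:Z_{1}\rightarrow X_{2}$ through $e_{2}$. Thus the entire argument reduces to checking one thing: that $f\circ e_{1}$ equalizes the pair $g_{2},h_{2}$.

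First I would verify exactly this equalizing condition, and this is the only step carrying any content. Using that the square with the $g$'s commutes (so $\overline{f}\circ g_{1}=g_{2}\circ f$), that $e_{1}$ is an equalizer (so $g_{1}\circ e_{1}=h_{1}\circ e_{1}$), and that the square with the $h$'s commutes (so $\overline{f}\circ h_{1}=h_{2}\circ f$), one reads off the chain
\[
g_{2}\circ f\circ e_{1}=\overline{f}\circ g_{1}\circ e_{1}=\overline{f}\circ h_{1}\circ e_{1}=h_{2}\circ f\circ e_{1}.
\]
Hence $f\circ e_{1}$ equalizes $g_{2}$ and $h_{2}$, the middle equality being precisely where the equalizer hypothesis on $Z_{1}$ is consumed.

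With the equalizing condition in hand, the universal property of the equalizer $e_{2}$ supplies a \emph{unique} morphism $\phi:Z_{1}\rightarrow Z_{2}$ satisfying $e_{2}\circ\phi=f\circ e_{1}$. This identity is exactly the statement that the square
\[
\begin{array}
[c]{ccc}
Z_{1} & \dashrightarrow & Z_{2}\\
\downarrow &  & \downarrow\\
X_{1} & \underrightarrow{\quad f\quad} & X_{2}
\end{array}
\]
commutes, and the uniqueness clause of the universal property delivers at once that $\phi$ is the only arrow making it commute.

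There is no genuine obstacle here: the lemma is a routine consequence of the universal property of equalizers in a left exact category, where the relevant equalizers exist. The single point to keep straight is bookkeeping—that the two hypothesized commuting squares are exactly the two one needs, one for the $g$'s and one for the $h$'s, and that they feed into the displayed chain of equalities in the correct order so that the middle step is where $e_{1}$ being an equalizer is invoked.
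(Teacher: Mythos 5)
Your proof is correct and is precisely the ``arrow chasing'' that the paper's one-line proof alludes to: show $f\circ e_{1}$ equalizes $g_{2},h_{2}$ via the two commuting squares and the equalizer property of $e_{1}$, then factor uniquely through the equalizer $e_{2}$. Since the paper offers no further detail, your write-up is essentially the intended argument, spelled out.
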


\begin{proof}
By the familiar token of what is dubbed arrow chasing.
\end{proof}

\begin{corollary}
\label{t4.4}Given a Weil $k$-algebra $W$, our previous mapping $\mathbf{V}%
^{W}$ assigning the object $\mathbf{V}^{W}\left(  \pi\right)  $ in the
category $\mathcal{K}$\ to each object
\[%
\begin{array}
[c]{c}%
E\\%
\begin{array}
[c]{cc}%
\pi & \downarrow
\end{array}
\\
M
\end{array}
\]
in the category $\overrightarrow{\mathcal{K}}$ can naturally be extended to a
functor $\mathbf{V}^{W}:\overrightarrow{\mathcal{K}}\mathcal{\rightarrow K} $
in the sense that the diagram
\[%
\begin{array}
[c]{ccc}%
\mathbf{V}^{W}\left(  \pi_{1}\right)  & \underrightarrow{\mathbf{V}^{W}\left(
\left(  f,\overline{f}\right)  \right)  } & \mathbf{V}^{W}\left(  \pi
_{2}\right) \\%
\begin{array}
[c]{cc}%
\tau_{W}^{\mathbf{V}}\left(  \pi_{1}\right)  & \downarrow
\end{array}
&  &
\begin{array}
[c]{cc}%
\downarrow & \tau_{W}^{\mathbf{V}}\left(  \pi_{2}\right)
\end{array}
\\
E_{1} & \underrightarrow{\quad f\quad} & E_{2}%
\end{array}
\]
commutes for any morphism
\[%
\begin{array}
[c]{ccc}%
E_{1} & \underrightarrow{\quad f\quad} & E_{2}\\%
\begin{array}
[c]{cc}%
\pi_{1} & \downarrow
\end{array}
&  &
\begin{array}
[c]{cc}%
\downarrow & \pi_{2}%
\end{array}
\\
M_{1} & \underrightarrow{\quad\overline{f}\quad} & M_{2}%
\end{array}
\]
in the category $\overrightarrow{\mathcal{K}}$.
\end{corollary}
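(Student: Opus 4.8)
The plan is to invoke Lemma \ref{t4.3}, which is precisely the tool for transporting a morphism across a pair of equalizer diagrams. Fix a morphism $(f,\overline{f}):\pi_{1}\rightarrow\pi_{2}$ in $\overrightarrow{\mathcal{K}}$, so that $f:E_{1}\rightarrow E_{2}$ and $\overline{f}:M_{1}\rightarrow M_{2}$ satisfy $\pi_{2}\circ f=\overline{f}\circ\pi_{1}$. I would apply Lemma \ref{t4.3} with $Z_{i}=\mathbf{V}^{W}(\pi_{i})$, $X_{i}=E_{i}\otimes W$, $Y_{i}=M_{i}\otimes W$, taking the parallel pairs to be $g_{i}=\pi_{i}\otimes\mathrm{id}_{W}$ and $h_{i}=\iota_{W,M_{i}}\circ\pi_{i}\circ\tau_{W,E_{i}}$, whose equalizer is $\widetilde{\tau}_{W}^{\mathbf{V}}(\pi_{i})$ by definition. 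For the horizontal maps required by the lemma I take $f\otimes\mathrm{id}_{W}=\mathbf{T}^{W}(f):E_{1}\otimes W\rightarrow E_{2}\otimes W$ and $\overline{f}\otimes\mathrm{id}_{W}=\mathbf{T}^{W}(\overline{f}):M_{1}\otimes W\rightarrow M_{2}\otimes W$.

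The two equalizer hypotheses of the lemma hold by construction, so what remains is to check that the $g$-square and the $h$-square commute. The $g$-square commutes immediately: since $\mathbf{T}^{W}$ is a functor, applying it to the arrow-category identity $\pi_{2}\circ f=\overline{f}\circ\pi_{1}$ yields $(\pi_{2}\otimes\mathrm{id}_{W})\circ(f\otimes\mathrm{id}_{W})=(\overline{f}\otimes\mathrm{id}_{W})\circ(\pi_{1}\otimes\mathrm{id}_{W})$. The $h$-square is the one genuine computation, and it is where I expect the only real work to lie. Starting from $(\overline{f}\otimes\mathrm{id}_{W})\circ h_{1}$, I rewrite $(\overline{f}\otimes\mathrm{id}_{W})\circ\iota_{W,M_{1}}=\iota_{W,M_{2}}\circ\overline{f}$ by naturality of $\iota_{W}$, then use $\overline{f}\circ\pi_{1}=\pi_{2}\circ f$, and finally rewrite $f\circ\tau_{W,E_{1}}=\tau_{W,E_{2}}\circ(f\otimes\mathrm{id}_{W})$ by naturality of $\tau_{W}$; the upshot is $(\overline{f}\otimes\mathrm{id}_{W})\circ h_{1}=h_{2}\circ(f\otimes\mathrm{id}_{W})$, which is exactly commutativity of the $h$-square. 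Thus Lemma \ref{t4.3} produces a unique morphism $\mathbf{V}^{W}(\pi_{1})\rightarrow\mathbf{V}^{W}(\pi_{2})$ commuting with the equalizer maps $\widetilde{\tau}_{W}^{\mathbf{V}}(\pi_{i})$, and I define $\mathbf{V}^{W}((f,\overline{f}))$ to be this morphism.

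With the map in hand, the square displayed in the corollary commutes by a short computation: $\tau_{W}^{\mathbf{V}}(\pi_{2})\circ\mathbf{V}^{W}((f,\overline{f}))=\tau_{W,E_{2}}\circ\widetilde{\tau}_{W}^{\mathbf{V}}(\pi_{2})\circ\mathbf{V}^{W}((f,\overline{f}))=\tau_{W,E_{2}}\circ(f\otimes\mathrm{id}_{W})\circ\widetilde{\tau}_{W}^{\mathbf{V}}(\pi_{1})=f\circ\tau_{W,E_{1}}\circ\widetilde{\tau}_{W}^{\mathbf{V}}(\pi_{1})=f\circ\tau_{W}^{\mathbf{V}}(\pi_{1})$, where the third equality is again naturality of $\tau_{W}$. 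Finally, functoriality of $\mathbf{V}^{W}$ (preservation of identities and of composition) will follow from the uniqueness clause of Lemma \ref{t4.3}: the identity on $\mathbf{V}^{W}(\pi)$ and the composite $\mathbf{V}^{W}((g,\overline{g}))\circ\mathbf{V}^{W}((f,\overline{f}))$ each satisfy the defining commutativity relation with the relevant $\widetilde{\tau}_{W}^{\mathbf{V}}$-maps (the latter because $\mathbf{T}^{W}$ respects composition), hence each must coincide with the unique induced morphism. The main obstacle is therefore confined to the verification of the $h$-square, which rests on combining the naturality of $\tau_{W}$ and $\iota_{W}$ with the defining commutativity of the arrow-category morphism; everything else is formal.
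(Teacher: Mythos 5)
Your proposal is correct and follows essentially the same route as the paper: both invoke Lemma \ref{t4.3} with the equalizer presentations of $\widetilde{\tau}_{W}^{\mathbf{V}}(\pi_{i})$, and your verification of the $h$-square by stacking the naturality square for $\tau_{W}$, the defining square of $(f,\overline{f})$, and the naturality square for $\iota_{W}$ is exactly the paper's observation that the outer rectangle of that three-storey diagram commutes. Your added checks of the displayed square and of functoriality via the uniqueness clause are details the paper leaves implicit.
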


\begin{proof}
It suffices to note that
\[%
\begin{array}
[c]{ccc}%
E_{1}\otimes W & \underrightarrow{\quad f\otimes\mathrm{id}_{W}\quad} &
E_{2}\otimes W\\%
\begin{array}
[c]{cc}%
\tau_{W,E_{1}} & \downarrow
\end{array}
&  &
\begin{array}
[c]{cc}%
\downarrow & \tau_{W,E_{2}}%
\end{array}
\\
E_{1} & \underrightarrow{\quad f\quad} & E_{2}\\%
\begin{array}
[c]{cc}%
\pi_{1} & \downarrow
\end{array}
&  &
\begin{array}
[c]{cc}%
\downarrow & \pi_{2}%
\end{array}
\\
M_{1} & \underrightarrow{\quad\overline{f}\quad} & M_{2}\\%
\begin{array}
[c]{cc}%
\iota_{W,M_{1}} & \downarrow
\end{array}
&  &
\begin{array}
[c]{cc}%
\downarrow & \iota_{W,M_{2}}%
\end{array}
\\
M_{1}\otimes W & \underrightarrow{\quad\overline{f}\otimes\mathrm{id}_{W}%
\quad} & M_{2}\otimes W
\end{array}
\]
In particular, the outer rectangle is commutative, so that the desired result
follows directly from the lemma.
\end{proof}

\begin{lemma}
\label{t4.5}Given three finite diagrams $\mathcal{F},\mathcal{G}%
,\mathcal{H}:\Lambda\rightarrow\mathcal{J}$ in a left exact category
$\mathcal{J}$ with the same underlying category $\Lambda$, two natural
transformations $\mu,\nu:\mathcal{G}\overset{\cdot}{\rightarrow}\mathcal{H}$
and a natural transformation $\theta:\mathcal{F}\overset{\cdot}{\rightarrow
}\mathcal{G}$, if the diagrams $\mathcal{G}$ and $\mathcal{H}$ are limit
diagrams, and if the diagram
\[%
\begin{array}
[c]{c}%
\mathcal{F}_{\lambda}\\%
\begin{array}
[c]{cc}%
\theta_{\lambda} & \downarrow
\end{array}
\\
\mathcal{G}_{\lambda}\\%
\begin{array}
[c]{ccc}%
\mu_{\lambda} & \downdownarrows & \nu_{\lambda}%
\end{array}
\\
\mathcal{H}_{\lambda}%
\end{array}
\]
is an equalizer for each $\lambda\in\Lambda$, then the diagram $\mathcal{F}%
$\ is a limit diagram.
\end{lemma}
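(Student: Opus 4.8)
The plan is to read the statement as a \textquotedblleft Fubini\textquotedblright\ principle for finite limits: an equalizer is itself a finite limit, so forming the equalizer fibrewise (the passage from $\mathcal{G},\mathcal{H}$ to $\mathcal{F}$ encoded by $\theta,\mu,\nu$) and then taking the limit over $\Lambda$ should agree with first taking the limit over $\Lambda$ and then the equalizer. Since $\mathcal{G}$ and $\mathcal{H}$ are already limit diagrams, the second route produces the equalizer of $\mu_{\ast},\nu_{\ast}$ at the distinguished vertex $\ast$ of $\Lambda$, and this is precisely $\mathcal{F}_{\ast}$ by the hypothesis that $\mathcal{F}_{\ast}\to\mathcal{G}_{\ast}\rightrightarrows\mathcal{H}_{\ast}$ is an equalizer. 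Hence $\mathcal{F}_{\ast}=\mathrm{Lim}_{\lambda\in\Lambda}\,\mathcal{F}_{\lambda}$, i.e. $\mathcal{F}$ is a limit diagram. Rather than invoke a packaged interchange-of-limits theorem, I would verify the universal property by hand, since the bookkeeping of the vertex object is cleaner that way, and since this matches the arrow-chasing style already used for Lemma \ref{t4.3}.

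First I would fix the vertex $\ast$ (the object at which each limit is attained) and let $(c\xrightarrow{\,p_{\lambda}\,}\mathcal{F}_{\lambda})$ be an arbitrary cone over the restriction of $\mathcal{F}$ to $\Lambda\setminus\{\ast\}$. Post-composing each $p_{\lambda}$ with $\theta_{\lambda}$ gives a cone $(c\xrightarrow{\,\theta_{\lambda}p_{\lambda}\,}\mathcal{G}_{\lambda})$ over $\mathcal{G}$; because $\mathcal{G}$ is a limit diagram, this cone factors uniquely as a single map $c\xrightarrow{\,q\,}\mathcal{G}_{\ast}$ followed by the projections $\mathcal{G}(\ast\to\lambda)$. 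The crucial step is to show that $q$ equalizes $\mu_{\ast}$ and $\nu_{\ast}$. Since $\mathcal{H}$ is a limit diagram, it suffices to check that $\mu_{\ast}q$ and $\nu_{\ast}q$ agree after composition with each projection $\mathcal{H}(\ast\to\lambda)$; using the naturality of $\mu$ and $\nu$ this collapses to the identities $\mu_{\lambda}(\theta_{\lambda}p_{\lambda})=\nu_{\lambda}(\theta_{\lambda}p_{\lambda})$, which hold because $\theta_{\lambda}$ equalizes $\mu_{\lambda},\nu_{\lambda}$ for every $\lambda$. Thus $q$ factors uniquely through the equalizer $\mathcal{F}_{\ast}\xrightarrow{\,\theta_{\ast}\,}\mathcal{G}_{\ast}$, producing the candidate map $c\to\mathcal{F}_{\ast}$ with $\theta_{\ast}\circ(c\to\mathcal{F}_{\ast})=q$.

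It remains to confirm that this candidate is a morphism of cones and that it is unique, and here I would lean on the fact that each equalizer arrow $\theta_{\lambda}$ is monic. Compatibility with the projections $\mathcal{F}(\ast\to\lambda)$ follows by post-composing with the monomorphism $\theta_{\lambda}$ and invoking the naturality of $\theta$ together with the already-established relation $\theta_{\ast}\circ(c\to\mathcal{F}_{\ast})=q$; uniqueness follows in the same spirit, since any competing map into $\mathcal{F}_{\ast}$ becomes, after composition with the monic $\theta_{\ast}$, a factorization of the $\mathcal{G}$-cone through its limit, which is unique, whence the two maps into $\mathcal{F}_{\ast}$ coincide. The only genuinely delicate point is the crucial step above: recognizing that equalizing can be tested projection-by-projection against the limit diagram $\mathcal{H}$ and then reduced via naturality to the fibrewise equalizer hypothesis. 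Everything else is routine arrow chasing, and the monicity of the equalizer arrows is what makes the cone-compatibility and uniqueness verifications go through cleanly.
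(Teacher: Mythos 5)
Your proof is correct and is exactly the argument the paper has in mind: the paper's own proof consists solely of the sentence ``By the familiar token of what is called arrow chasing,'' and your write-up supplies precisely that arrow chase (factor the $\theta$-pushed cone through the limit $\mathcal{G}_{\ast}$, test $\mu_{\ast}q=\nu_{\ast}q$ against the projections of the limit diagram $\mathcal{H}$ using naturality and the fibrewise equalizers, then factor through the equalizer $\theta_{\ast}$ and use monicity of the $\theta_{\lambda}$ for cone-compatibility and uniqueness). No gaps; it is a complete version of the proof the paper omits.
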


\begin{proof}
By the familiar token of what is called arrow chasing.
\end{proof}

\begin{corollary}
\label{t4.6}Given a Weil $k$-algebra $W$, the functor $\mathbf{V}%
^{W}:\overrightarrow{\mathcal{K}}\mathcal{\rightarrow K}$ is left exact.
\end{corollary}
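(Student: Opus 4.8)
The plan is to reduce the claim to Lemma~\ref{t4.5} by exhibiting, for each finite limit diagram in $\overrightarrow{\mathcal{K}}$, the value of $\mathbf{V}^{W}$ as the apex of a stacked equalizer of diagrams of exactly the shape that lemma governs. Concretely, suppose $\mathcal{P}:\Lambda\rightarrow\overrightarrow{\mathcal{K}}$ is a finite limit diagram (with $\Lambda$ carrying its cone apex), and write each $\mathcal{P}_{\lambda}$ as $\pi_{\lambda}:E_{\lambda}\rightarrow M_{\lambda}$, the apex being $\pi_{\infty}:E_{\infty}\rightarrow M_{\infty}$. Since limits in the arrow category $\overrightarrow{\mathcal{K}}$ are computed componentwise, we have $E_{\infty}=\mathrm{Lim}_{\lambda}E_{\lambda}$ and $M_{\infty}=\mathrm{Lim}_{\lambda}M_{\lambda}$, with $\pi_{\infty}$ the induced morphism.

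First I would build the three auxiliary diagrams in $\mathcal{K}$. Put $\mathcal{G}_{\lambda}=E_{\lambda}\otimes W$ and $\mathcal{H}_{\lambda}=M_{\lambda}\otimes W$, with apex values $E_{\infty}\otimes W$ and $M_{\infty}\otimes W$. Because $\mathbf{T}^{W}=\cdot\otimes W$ is a left exact functor, it carries the componentwise limits $E_{\infty}$ and $M_{\infty}$ to $\mathrm{Lim}_{\lambda}(E_{\lambda}\otimes W)$ and $\mathrm{Lim}_{\lambda}(M_{\lambda}\otimes W)$; hence both $\mathcal{G}$ and $\mathcal{H}$ are limit diagrams. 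I would then define two natural transformations $\mu,\nu:\mathcal{G}\overset{\cdot}{\rightarrow}\mathcal{H}$ by
\[
\mu_{\lambda}=\pi_{\lambda}\otimes\mathrm{id}_{W},\qquad\nu_{\lambda}=\iota_{W,M_{\lambda}}\circ\pi_{\lambda}\circ\tau_{W,E_{\lambda}},
\]
and take $\mathcal{F}_{\lambda}=\mathbf{V}^{W}(\pi_{\lambda})$ together with $\theta_{\lambda}=\widetilde{\tau}_{W}^{\mathbf{V}}(\pi_{\lambda}):\mathcal{F}_{\lambda}\rightarrow\mathcal{G}_{\lambda}$. By the very definition of the vertical bundle, for each $\lambda$ the column $\mathcal{F}_{\lambda}\rightarrow\mathcal{G}_{\lambda}\rightrightarrows\mathcal{H}_{\lambda}$ (the parallel pair being $\mu_{\lambda},\nu_{\lambda}$) is an equalizer.

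With these data in hand, Lemma~\ref{t4.5} applies verbatim and yields that $\mathcal{F}$ is a limit diagram, i.e. $\mathbf{V}^{W}(\pi_{\infty})=\mathrm{Lim}_{\lambda}\mathbf{V}^{W}(\pi_{\lambda})$, which is exactly the assertion that $\mathbf{V}^{W}$ preserves the chosen finite limit. Since $\mathbf{V}^{W}$ is already known to be a functor by Corollary~\ref{t4.4}, preservation of all finite limits is precisely left exactness. The terminal object is the degenerate (empty-diagram) case: as $1\otimes W=\mathbf{T}^{W}1=1$, the object $\mathbf{V}^{W}(\mathrm{id}_{1})$ is the equalizer of two maps $1\rightrightarrows1$, again $1$.

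The hard part will be checking that $\mu$ and $\nu$ are honestly natural transformations $\mathcal{G}\overset{\cdot}{\rightarrow}\mathcal{H}$. For $\mu$ this is immediate from functoriality of $\cdot\otimes W$ together with the relation $\overline{f}\circ\pi_{\lambda}=\pi_{\lambda'}\circ f$ attached to each morphism $(f,\overline{f})$ of $\Lambda$. For $\nu$ the verification is the delicate one, since it threads $\tau_{W}$ downward and $\iota_{W}$ back upward; here I would invoke the naturality of the transformations $\tau_{W}$ and $\iota_{W}$ and the commutativity of the stacked rectangle already displayed in the proof of Corollary~\ref{t4.4}, whose outer boundary is exactly the naturality square for $\nu$. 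Everything else is the routine bookkeeping of componentwise limits in $\overrightarrow{\mathcal{K}}$.
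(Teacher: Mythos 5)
Your proposal is correct and follows essentially the same route as the paper: decompose the finite limit diagram in $\overrightarrow{\mathcal{K}}$ into its domain and codomain limit diagrams, tensor with $W$ to get $\mathcal{G}$ and $\mathcal{H}$, take $\mu=\pi_{\lambda}\otimes\mathrm{id}_{W}$ and $\nu=\iota_{W}\circ\pi_{\lambda}\circ\tau_{W}$, and invoke Lemma~\ref{t4.5} with $\mathcal{F}_{\lambda}=\mathbf{V}^{W}(\pi_{\lambda})$. The only difference is that you spell out the naturality checks for $\mu$ and $\nu$, which the paper leaves implicit by writing $\nu$ as a composite of natural transformations.
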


\begin{proof}
Given a finite limit diagram in the category $\overrightarrow{\mathcal{K}}$,
which decomposes into two limit diagrams $\mathcal{G}^{\prime},\mathcal{H}%
^{\prime}:\Lambda\rightarrow\mathcal{K}$ in the category $\mathcal{K}$\ and a
natural transformation $\mu^{\prime}:\mathcal{G}^{\prime}\overset{\cdot
}{\rightarrow}\mathcal{H}^{\prime}$, we have two limit diagrams
\[
\mathcal{G=G}^{\prime}\otimes W
\]
\[
\mathcal{H=H}^{\prime}\otimes W
\]
and two natural transformations
\[
\mu=\mu^{\prime}\otimes W:\mathcal{G}\overset{\cdot}{\rightarrow}\mathcal{H}%
\]
\[
\nu:\mathcal{G}\overset{\cdot}{\overrightarrow{\tau_{W}}}\mathcal{G}^{\prime
}\overset{\cdot}{\overrightarrow{\mu^{\prime}}}\mathcal{H}^{\prime}%
\overset{\cdot}{\overrightarrow{\iota_{W}}}\mathcal{H}%
\]
Therefore the desired result follows from the lemma.
\end{proof}

\begin{corollary}
\label{t4.7}(\underline{The Vertical Microlinearity Theorem}) Let
$\pi:E\rightarrow M$ be a morphism in the category $\mathcal{K}$ with $E$\ and
$M $\ being microlinear. If $\mathcal{D}$ is a finite limit diagram in the
category $\mathbf{Weil}_{k}$ , then $E\otimes^{\perp}\mathcal{D}$ is a limit
diagram in the category $\mathcal{K}$.
\end{corollary}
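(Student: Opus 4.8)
The plan is to reduce the assertion to Lemma \ref{t4.5}, mimicking the proof of Corollary \ref{t4.6} but with the two variables interchanged: there the Weil algebra $W$ was held fixed while the diagram ranged over $\overrightarrow{\mathcal{K}}$, whereas here the morphism $\pi:E\rightarrow M$ is held fixed while the diagram $\mathcal{D}$ ranges over $\mathbf{Weil}_{k}$. Writing $\Lambda$ for the (finite) index category of $\mathcal{D}$, I would produce three diagrams $\mathcal{F},\mathcal{G},\mathcal{H}:\Lambda\rightarrow\mathcal{K}$ together with natural transformations that realize $E\otimes^{\perp}\mathcal{D}$ as a levelwise equalizer, and then quote the lemma.

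Concretely, set $\mathcal{G}=E\otimes\mathcal{D}$ and $\mathcal{H}=M\otimes\mathcal{D}$, obtained by applying $E\otimes(\cdot)$ and $M\otimes(\cdot)$ to $\mathcal{D}$ objectwise and arrowwise. Since $\mathcal{D}$ is a finite limit diagram and both $E$ and $M$ are microlinear, $\mathcal{G}$ and $\mathcal{H}$ are limit diagrams in $\mathcal{K}$; this is exactly where the hypothesis is spent. Let $\mu,\nu:\mathcal{G}\overset{\cdot}{\rightarrow}\mathcal{H}$ have components $\mu_{\lambda}=\pi\otimes\mathrm{id}_{\mathcal{D}_{\lambda}}$ and $\nu_{\lambda}=\iota_{\mathcal{D}_{\lambda},M}\circ\pi\circ\tau_{\mathcal{D}_{\lambda},E}$, so that, by the very definition of the vertical bundle, the equalizer inclusion $\widetilde{\tau}_{\mathcal{D}_{\lambda}}^{\mathbf{V}}(\pi)$ exhibits $\mathcal{F}_{\lambda}:=E\otimes^{\perp}\mathcal{D}_{\lambda}=\mathbf{V}^{\mathcal{D}_{\lambda}}(\pi)$ as the equalizer of $\mu_{\lambda}$ and $\nu_{\lambda}$. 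These inclusions assemble into $\theta:\mathcal{F}\overset{\cdot}{\rightarrow}\mathcal{G}$, and Lemma \ref{t4.5} then gives at once that $\mathcal{F}=E\otimes^{\perp}\mathcal{D}$ is a limit diagram, which is the assertion.

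The one place demanding genuine care, and where the argument parts company with Corollary \ref{t4.6}, is the naturality of $\mu$ and $\nu$ in the Weil-algebra variable, i.e. along the arrows of $\Lambda$. For a morphism $\varphi:\mathcal{D}_{\lambda}\rightarrow\mathcal{D}_{\lambda^{\prime}}$ induced by $\Lambda$, naturality of $\mu$ is precisely the naturality square of $\alpha_{\varphi}$ evaluated at $\pi$, namely the bifunctor identity of Proposition \ref{t3.1}. Naturality of $\nu$ is subtler: I would factor it as $\iota\circ\underline{\pi}\circ\tau$ through the constant diagrams at $E$ and $M$ and then verify that $\tau_{\mathcal{D}_{\lambda},E}$ and $\iota_{\mathcal{D}_{\lambda},M}$ are natural in $\lambda$. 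Using the composition law $\alpha_{\psi}\cdot\alpha_{\varphi}=\alpha_{\psi\circ\varphi}$, this comes down to the two identities $\underline{\tau}_{\mathcal{D}_{\lambda^{\prime}}}\circ\varphi=\underline{\tau}_{\mathcal{D}_{\lambda}}$ and $\varphi\circ\underline{\iota}_{\mathcal{D}_{\lambda}}=\underline{\iota}_{\mathcal{D}_{\lambda^{\prime}}}$, which hold because $k$ is terminal in $\mathbf{Weil}_{k}$ and because every morphism of Weil algebras preserves the unit. The very same two commuting squares, fed into Lemma \ref{t4.3}, are what make $E\otimes^{\perp}(\cdot)$ a functor on $\mathbf{Weil}_{k}$ and $\theta$ an honest natural transformation, so I would record that functoriality first. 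Once the bookkeeping in the $W$-direction is settled, the conclusion is a purely formal consequence of Lemma \ref{t4.5}, and the finiteness of $\mathcal{D}$ ensures that only the finite limits covered by microlinearity are involved.
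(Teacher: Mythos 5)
Your proposal is correct and follows essentially the same route as the paper: both apply Lemma \ref{t4.5} with $\mathcal{G}=E\otimes\mathcal{D}$ and $\mathcal{H}=M\otimes\mathcal{D}$ (limit diagrams by microlinearity of $E$ and $M$) and with the levelwise equalizers $\mathbf{V}^{\mathcal{D}_{\lambda}}(\pi)$ supplying $\mathcal{F}$. Your additional verification of the naturality of $\mu$ and $\nu$ in the Weil-algebra variable is a detail the paper leaves implicit, not a change of method.
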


\begin{proof}
It suffices to note that both $E\otimes\mathcal{D}$\ standing for
$\mathcal{G}$\ in the above lemma and $M\otimes\mathcal{D}$\ standing for
$\mathcal{H} $\ in the above lemma are limit diagrams, because $E$\ and
$M$\ are assumed to be microlinear. Then the desired result follows directly
from the lemma.
\end{proof}

\end{document}